\documentclass{amsart}

\numberwithin{equation}{section}

\usepackage{enumerate}

\usepackage[english]{babel}
\usepackage{amsfonts, amsmath, amsthm, amssymb,amscd,indentfirst,mathrsfs}

\usepackage{bm}
\usepackage{color}
\usepackage{xcolor}
\usepackage{tikz}
\usepackage{esint}

%  These two are for brazilian accents
\usepackage[utf8]{inputenc} % usually not needed (loaded by default)
\usepackage[T1]{fontenc}

\usepackage{graphicx}
\usepackage{epstopdf}
\usepackage{latexsym}

\usepackage{palatino}
%\pagestyle{headings}
%\pagenumbering{Roman}
\usepackage{marginnote}

%%%%%%%%%%%%%%%%%%%%%%%%%%%%%%%%%%%%%%%%%%%%%%%%%%%%%%

\theoremstyle{plain}
\newtheorem{theorem}{Theorem}[section]

\newtheorem{proposition}[theorem]{Proposition}

\newtheorem{corollary}[theorem]{Corollary}
\theoremstyle{definition}
\newtheorem{definition}[theorem]{Definition}

\newtheorem{example}[theorem]{Example}

\newtheorem{remark}[theorem]{Remark}

%
%\newtheorem{theorem}{Theorem}[section]
%\newtheorem{assertion}[theorem]{Assertion}
%\newtheorem{proposition}[theorem]{Proposition}
%\newtheorem{lemma}[theorem]{Lemma}
%\newtheorem{definition}[theorem]{Definition}
%\newtheorem{claim}[theorem]{Claim}
%\newtheorem{corollary}[theorem]{Corollary}
%\newtheorem{observation}[theorem]{Observation}
%\newtheorem{conjecture}[theorem]{Conjecture}
%\newtheorem{question}[theorem]{Question}
%\newtheorem{example}[theorem]{Example}
%\newtheorem{remark}[theorem]{Remark}

%\def\theequation{\thesection.\arabic{equation}}
%\def\Wc{\mathcal{W}}
%\def\e{\epsilon}
%\def\N{\mathbb{N}}
%\def\M{M_{\text{ext}}}
%\def\R{\mathbb{R}}
%\def\Rn{{\mathbb{R}}^n_+}
%\def\d{\partial}
%\def\a{\alpha}
%\def\b{\beta}
%\def\l{\lambda}
%\def\H{\mathbb H^n_+}
%\def\crit {\frac{2n}{n-2}}
%\def\critbordo{\frac{2(n-1)}{n-2}}
%\def\ba{\begin{align}}
	%\def\ea{\end{align}}
%\def\bp{\begin{proof}}
	%	\def\ep{\end{proof}}
%\def\Q{Q(B^n,\d B)}
%\def\QM{Q(M,\d M)}
%\def\S{S^{n-1}_+}
%\def\U{U_{\epsilon}}

%definições do Sergio

%\def\ba{\begin{align}}
	\def\bp{\begin{proof}}
		\def\ep{\end{proof}}

	\def\dirac{{\partial\!\!\!/}}

	\usepackage{amsaddr}

	\begin{document}
		
		\title[Elliptic operators in conical manifolds]{Mapping properties of geometric elliptic operators in conformally conical spaces: an introduction with examples}
		
		\author{Levi Lopes de Lima}
		\address{Universidade Federal do Cear\'a (UFC),
			Departamento de Matem\'{a}tica, Campus do Pici, Av. Humberto Monte, s/n, Bloco 914, 60455-760,
			Fortaleza, CE, Brazil.}

		\email{levi@mat.ufc.br}
		
		\thanks{The author has been supported by CNPq/Brazil grant
			312485/2018-2 and FUNCAP/CNPq/PRONEX grant 00068.01.00/15.
		}
		
		\begin{abstract}
			In this largely expository note, we discuss the mapping properties of the Laplacian (and other geometric elliptic operators) in spaces with an isolated  conical singularity following the approach developed by B.-W. Schulze and collaborators. 
		Our presentation aims at illustrating the 
		versatility of these results by describing how certain representative (and seemingly disparate) applications in Geometric Analysis follow from a common setup. 
		\end{abstract}

		\maketitle
		%\tableofcontents

		%%%%%%%%%%%%%%%%%%%%%%%%%%%%%%%%%%%%%%%%%%%%%%%%%%%%%%%%%%%%%%%%%

			\section{Introduction}\label{intro}	
		
		The theory of elliptic differential operators acting on sections of a vector bundle over a compact  manifold $X$ is a well established discipline \cite{chazarain2011introduction,hormander2007analysis,wloka1995boundary,grubb2012functional}. 
		If $X$ is boundaryless then we may resort to the fact that any such manifold can  be infinitesimally identified  to euclidean space around each of its points in order to transplant the symbolic calculus of pseudo-differential operators in flat space to this ``curved'' arena.
		As a consequence, 
		the main technical result in the theory is proved, namely, the existence of a (pseudo-differential) param\-etr\-ix for the given elliptic differential operator $D$, from which the  standard mapping properties (regularity of solutions, Fredholmness, etc) in the usual scale of Sobolev spaces may be readily derived. From this perspective, we may assert that the resulting theory is a natural outgrowth of Fourier Analysis as applied to the classical procedure of ``freezing the coefficients''.  
		
		In case the underlying manifold $X$ carries a boundary $\partial X$, a fundamentally distinct approach is needed as the local identification to euclidean space obviously fails to hold in a neighborhood of a point in the boundary (from this standpoint, we are forced to view $\partial X$ as the ``singular locus'' of $X$). We may, however, pass to the double of $X$, say $2X$, and assume that a suitable {\em elliptic} extension of the original operator $D$, say $2D$, is available. Since $\partial(2X)=\emptyset$, we have at our disposal a parametrix for $2D$ which may be employed to construct a pseudo-differential projection $C$ acting on sections restricted to ${\partial X}$ (the Calder\'on-Seeley projector). If the differential operator $B$ defining the given boundary conditions is such that the principal symbol of $A=BC$ is sufficiently non-degenerate (for instance, if the pair $(D,B)$ satisfies the so-called Lopatinsky-Shapiro condition) then a parametrix for $A$ is available (recall that $\partial(\partial X)=\emptyset$) and from this 
		we may deduce the expected mapping properties
		of  the associated boundary value map 
		\begin{equation}\label{bd:map}
			\mathcal Du=(Du,Bu|_{\partial X})
		\end{equation}        
		acting on suitable Sobolev spaces. Thus, the theory of elliptic boundary value problems ultimately hinges  on the fact that the corresponding singular locus $\partial X$ not only is intrinsically smooth but also may be easily ``resolved'' after passage to $2X$\footnote{This trick of passing from the bordered manifold $X$ to the boundaryless manifolds $\partial X$ and $2X$ is a key ingredient in index theory \cite{atiyah1964index}.}.
		
		We may now envisage a situation where the underlying space $X$ displays a singular locus $Y$ which fails to admit such a simple resolution (as a boundary does). For instance, we may agree that {the singular locus $Y\subset X$ has the structure of a smooth closed manifold and that a neighborhood $U\subset X$ of $Y $ is the total space of a fiber bundle} 
		\[
		\begin{array}{ccc}
			\mathcal C^F	& \hookrightarrow&	U \\
			&&
			\,\,\,\,\Big\downarrow \pi\\
			&&	Y
		\end{array}
		\]       
		whose typical fyber is a cone $\mathcal C^F$ over a closed manifold $F$. For simplicity, we assume that this bundle is trivial, so that $U$ carries natural coordinates $(x,y,z)$, where $y\in Y$, $z\in F$ and $x$ {is the radial function obtained after identifying the cone generatrix to the inverval $[0,\delta]$, $\delta>0$,  with $x=0$ along $Y$}. We 
		now infuse a bit of geometry in this discussion by
		requiring that the smooth locus $X'=X\backslash Y$ carries a Riemannian metric $\overline g$ so that 
		\begin{equation}\label{met:edge:ex}
			\overline g|_{U'}=dx^2+x^2g_F(z)+g_Y(y), \quad U'=U\backslash Y,
		\end{equation}
		where $g_F$ and $g_Y$ are fixed Riemannian metrics on $F$ and $Y$, respectively. 
		{By abuse of language, we say that $x$ is a ``defining function'' for $Y$ (with respect to $\overline g$).}
		
		The simplest of such ``edge-type'' manifolds occurs when $Y$ collapses into a point, so we obtain a conical manifold (see Definition \ref{conic:metric} below). In any case, we are led to consider {\em geometric} differential operators (i.e. naturally associated to $\overline g$ such as the Laplacian acting on functions, the Dirac operator acting on spinors, etc.) and pose the general problem of studying their mapping properties in suitable functional spaces. The main purpose of this note is to illustrate through examples how useful this elliptic analysis on singular spaces turns out to be. 
		
		The problem remains of transplanting the highly successful ``smooth'' elliptic theory outlined above to this setting. Clearly, the edge-type structure around $Y$ {poses} an obvious obstruction to a straightforward extension of the pseudo-differential calculus. Indeed, this leads us to suspect that, besides the standard ellipticity assumption on $X'$, a complementary notion of ellipticity around $Y$ is required in order to construct a global parametrix. In this regard, there is  no canonical choice and the final  formulation depends on which technique  one is most familiar with. In the rather informal (and simplified) exposition below, which actually emphasizes the conical case (so that $Y=\{q\}$), we roughly follow the approach developed by B.-W. Schulze and collaborators \cite{schulze1998boundary,egorov2012pseudo}, as we believe it displays an adequate balance between technical subtlety and conceptual transparency. In this setting, a key ingredient is the classical Mellin transform, which allows us to pass from the restriction to $U'$ of the given geometric operator $D$ to its {\em conormal symbol} $\xi_D$. The appropriate complementary notion of ellipticity is then formulated by fixing $\beta\in\mathbb R$ and then  requiring that $\xi_D$, viewed as a polynomial function  {whose coefficients are differential operators} acting on the fiber $F$, is invertible when restricted to the vertical line $\Gamma_\beta=\{z\in\mathbb C;{\rm Re}\,z=\beta\}$\footnote{The moral here is that, when trying to freeze the coefficients of $D$ around the tip of the cone, we are inevitably led to contemplate the Mellin transform as the proper analogue of the Fourier transform which, as already noted, does a perfectly good job in the smooth locus.}. Armed with this notion of ellipticity, an appropriate pseudo-differential calculus may be conceived which leads to 
		the construction of the sought-after parametrix; this has as a formal consequence the Fredholmness of    
		$D=D_\beta$ when acting on the so-called {Sobolev-Mellin scale} $\mathcal H^{\sigma,p}_{\beta}(X)$, {\em independently} of $(\sigma,p)\in\mathbb R\times\mathbb Z_+$. Moreover, the index of $D_\beta$ jumps precisely at those values of $\beta$ for which ellipticity fails by an integer quantity depending on the kernel of $\xi_D|_{\Gamma_\beta}$. For instance, if $D=\Delta$, the Laplacian of the underlying conical metric, which is our main concern here, then $\xi_{\Delta}(z)=z^2+bz+\Delta_{g_F}$ for some $b\in\mathbb R$, so a jump occurs at each $\beta$ satisfying the {\em indicial equation} 
		\begin{equation}\label{int:indicial}
			\beta^2+b\beta-\mu=0,
		\end{equation}
		for some $\mu\in {\rm Spec}(\Delta_{g_F})$, and equals the multiplicity of $\mu$ as an eigenvalue. Thus, if a fairly precise knowledge of ${\rm Spec}(\Delta_{g_F})$ is available, the Fredholm index of $\Delta$ in the whole scale $\mathcal H^{s,p}_{\beta}(X)$ can be determined upon computation at a single value of $\beta$.  This final piece of calculation may be carried out by using the fact {that $\Delta$ gives rise to a densely defined, unbounded operator, say $\Delta_\beta$, acting on the Hilbert sector of the scale, namely, $\mathcal H^{\bullet,2}_{\beta}(X)$}. A separate argument, which boils down to identifying the minimal and maximal domains of this operator, then assures the existence of at least a $\beta_0$  such that $\Delta_{\beta_0}$ {has a {\em unique} closed extension (which is necessarily Fredholm).} Usually, $\beta_0$ lies in the interval determined by the indicial roots (the solutions of (\ref{int:indicial})) corresponding to $\mu=0$, so that in case $b\neq 0$, $\Delta_\beta$ turns out  {to be Fredholm with the {\em same} index as long as $\beta$ varies in the interval with endpoints $0$ and $-b$}; compare with Theorem \ref{self:adj}.

		A detailed presentation of the program outlined above for a general elliptic operator is far beyond the scope of this introductory note. Instead, we merely sketch the argument for the Laplacian in conical manifolds (Sections \ref{conic} and \ref{proof:m:t}) and indicate how the method can be extended to other geometric operators by considering the case of the Dirac operator (Section \ref{map:dirac}). In fact, here we focus instead on illustrating the versatility of this theory by 
		including a few representative applications of these mapping properties in Geometric Analysis (Sections \ref{examp:op} and \ref{nonempty:bd}). We insist, however, that the material discussed here is standard, drawn from a number of sources, so no claim is made regarding originality (except perhaps for the naive computations leading to Theorem \ref{albin:mellin:enh}). Indeed, this note has been written in the perspective that,  
		after reading our somewhat informal account  of a noticeably difficult subject, the diligent reader will be able to fill the formidable gaps upon consultation of the original sources. In  
		this regard, we note that, alternatively to the path just outlined, the mapping results described below may be obtained as a consequence of the powerful ``boundary fibration calculus''   \cite{melrose1993atiyah,melrose1990pseudodifferential,mazzeo1991elliptic,lauter2003pseudodifferential,grieser2001basics,melrose1996differential,gil2007geometry,krainer2018friedrichs} (a comparison of Melrose's $b$-calculus and Schulze's cone algebra appears in \cite{lauter2001pseudodifferential}). Also, direct approaches, which in a sense avoid the consideration of the corresponding pseudo-differential formalism, are available in each specific application we consider here \cite{almaraz2014positive,andersson1993elliptic,bartnik1986mass,chaljub1979problemes,lockhart1985elliptic,lee2006fredholm,almaraz2021spacetime,pacini2010desingularizing}.  We believe, however, that a presentation of their mapping properties  as a repertory of results stemming from a common source  contributes to highlight the unifying features of geometric differential operators in singular spaces.

		\section{Fredholmness of the Laplacian in conformally conical manifolds}\label{conic}

		In this section, we define the class of conformally conical manifolds (this entails a slight modification of (\ref{met:edge:ex}) which incorporates a conformal factor involving a suitable power of the defining function $x$) and discuss a few representative examples in this category. We then introduce the relevant functional spaces (the Sobolev-Mellin scale $\mathcal H^{\sigma,p}_\beta$) and then formulate a result (Theorem \ref{self:adj}) which precisely locates the set of values of $\beta$ for which the corresponding Laplacian is Fredholm with an explictly computable index.    
		
		\subsection{Conformally conical manifolds}\label{con:man} 
		Given a closed Riemannian manifold $(F,g_F)$ of 
		dimension\footnote{{In fact, the general theory also works fine for $n=2$ and the assumption $n\geq 3$ is only needed for Theorem \ref{self:adj} and its consequences.}} $n-1\geq 2$, we consider the {\em infinite cone} $(\mathcal C^{(F,g_F)},g_{\mathcal C,F})$ over $(F,g_F)$: 
		\[
		\mathcal C^{(F,g_F)}={\mathbb R_{>0}\times F}
		\]
		endowed with the cone  metric
		\begin{equation}\label{cone:met}
			g_{\mathcal C,F}=dr^2+r^2g_F, \quad r\in\mathbb R_{>0}. 
		\end{equation}
		We then define the {\em truncated cones} by
		\[
		\mathcal C^{(F,g_F)}_0=\{(r,z)\in \mathcal C^{(F,g_F)};z\in F, 0<r<1\}
		\]
		and 
		\[
		\mathcal C^{(F,g_F)}_\infty=\{(r,z)\in \mathcal C^{(F,g_F)};z\in F, 1<r<+\infty\}, 
		\]
		both endowed with the induced metric.
		We also consider the {\em infinite cylinder} $(\mathsf C^{(F,g_F)},g_{\mathsf C, F})$ over $(F,g_F)$: 
		\[
		\mathsf C^{(F,g_F)}=\mathbb R\times F
		\]
		endowed with the product metric
		\[
		g_{\mathsf C,F}=dr^2+g_F.
		\]
		
		We now consider a compact topological space $X$ which is smooth everywhere except possibly at a point, say $q$.   We endow the smooth locus $X':=X\backslash\{q\}$, $\dim X'=n\geq 3$, with a Riemannian metric $\overline g$ and assume that there  exists a neighborhood $U$  of $q$ (the conical region) such that $U':=U \backslash \{q\}$ is diffeomorphic 
		to $\mathcal C^{(F,g_F)}_0$ and 
		\begin{equation}\label{meg:ov:g}
			\overline g|_{U'}=g_{\mathcal C,F}=dx^2+x^2g_F,
		\end{equation}  
		where for convenience we have set $x=r$ in the description of the cone metric to emphasize that $x$ is viewed as a defining function for $\{q\}$; compare with (\ref{cone:met}).

		\begin{definition}\label{conic:metric}
			A {\em conformally conical manifold} is a pair $(X,g_s)$, where $X$ is as above, and $g_s$ is a Riemannian metric in $X'$ which, restricted to $U'$, satisfies 
			\begin{equation}\label{met:edge}
				g_s:=x^{2s-2}{\left(\overline g+ o(1)\right)},\quad s\in\mathbb R, 
			\end{equation}
			as $x\to 0$.
			We then say that $(F,g_F)$ is the {\em link}
			% {and $s$ is the weight} 
			of  $(X,g_s)$.
		\end{definition}
		
		\begin{remark}\label{flex:conf}
			{Our terminology is justified by the presence of the conformal factor next to $\overline g+o(1)$, which allows us to arrange the examples below in a single geometric structure.}   
		\end{remark}
		
		\begin{remark}\label{decay}
			In applications, it is often needed to append  decay relations to (\ref{met:edge}) for the corresponding derivatives up to second order at least; see Remark \ref{order} below. 
		\end{remark}
		
		\begin{remark}\label{complete}
			$(X',g_s)$ is complete if and only if $s\leq 0$.
		\end{remark}
		
		We will be interested in doing analysis in the open manifold $(X',g_s)$. More precisely, we will study the mapping properties of the Laplacian $\Delta_{g_s}$ in an appropriate scale of Sobolev spaces. Before proceeding, however, we discuss a few examples, which highlight the distinguished roles played by the ``rigid'' spaces $\mathcal C^{(F,g_F)}_0$, $\mathcal C^{(F,g_F)}_\infty$ and $\mathsf C^{(F,g_F)}$  
		as asymptotic models.  
		
		\begin{example}\label{ex:conic} (${\rm AC}_0$ manifolds)
			Let  
			$(V,h)$ be an open manifold for which there exists a compact  $K\subset V$ and a diffeomorphism $\psi: \mathcal C^{(F,g_F)}_0\to V\backslash K$ such that, as $r\to 0$,
			\[
			|\nabla_b^k(\psi^*h-g_{\mathcal C,F})|_b=O(r^{\nu_0-k}), \quad 0\leq k\leq m.
			\]
			Here, $m\geq 0$ is the order and $\nu_0>0$ is the rate of decay. {Also, the subscript $b$ refers to invariants attached to the ``rigid'' conical metric in the model space (the same notation is used in the examples below).} We then say that $(V,h)$ is an {\em asymptotically conical manifold at the origin} (${\rm AC}_0$). Clearly, if we take $x=r$, this corresponds to a {conformally} conical manifold with $s=1$ in (\ref{met:edge}).
		\end{example} 
		
		\begin{example}\label{ex:af}
			(${\rm AC}_\infty$ manifolds)
			Let  
			$(V,h)$ be an open manifold for which there exists a compact  $K\subset V$ and a diffeomorphism $\psi:\mathcal C^{(F,g_F)}_\infty\to V\backslash K$ such that, as $r\to +\infty$,
			\[
			|\nabla_b^k(\psi^*h-g_{\mathcal C,F})|_b=O(r^{-\nu_\infty-k}), \quad 0\leq k\leq m.
			\]
			Here, $m\geq 0$ is the order and $\nu_\infty>0$ is the rate of decay. We then say that $(V,h)$ is an {\em asymptotically conical manifold at infinity} $({\rm AC}_\infty)$. Clearly, if we take $x=r^{-1}$, this corresponds to a {conformally} conical manifold with $s=-1$ in (\ref{met:edge}).
		\end{example}

		\begin{example}\label{asym:cyl}
			(${\rm ACyl}$ manifolds)
			Let  
			$(V,h)$ be an open manifold for which there exists a compact  $K\subset V$ and a diffeomorphism $\psi:\mathsf C^{(F,g_F)}_\infty\to V\backslash K$ such that, as $r\to +\infty$,
			\[
			|\nabla_b^k(\psi^*h-g_{\mathsf C,F})|_b=O(e^{-(\nu_c+k)r}), \quad 0\leq k\leq m.
			\]
			Here, $m\geq 0$ is the order and $\nu_c>0$ is the rate of decay. We then say that $(V,h)$ is an {\em asymptotically cylindrical manifold} $({\rm ACyl})$. Clearly, if we take $x=e^{-r}$, this corresponds to a conformally conical manifold with $s=0$ in (\ref{met:edge}). These manifolds play a central role in the formulation and proof of the Atiyah-Patodi-Singer index theorem \cite{atiyah1975spectral,melrose1993atiyah}.
		\end{example}

		\begin{example}
			\label{a0:ainf} (${\rm AC}_0/{\rm AC}_\infty$ manifolds) Assume more generally that $V\backslash K$ decomposes as a {\em finite} union of ends which are either ${\rm AC}_0$ or ${\rm AC}_\infty$. These manifolds, which are called {\em conifolds} in \cite{pacini2013special}, appear prominently in the study of moduli spaces of special Lagrangian submanifolds; see also \cite{joyce2003special}. 
		\end{example}

		\begin{remark}\label{order} 
			In all examples above, we take $m\geq 2$.
		\end{remark}
		
		\subsection{Sobolev-Mellin spaces and Fredholmness}\label{mel:sob:fred}
		Given  $\beta\in \mathbb R$ and integers $k\geq 0$ and $1< p<+\infty$, we define $\mathcal H_\beta^{k,p}(X)$ to be the space of all distributions $u\in L^p_{\rm loc}(X',d{\rm vol}_{\overline g})$   such that:
		\begin{itemize}
			\item for any cutoff function $\varphi$ with $\varphi\equiv 1$ near $q$ and $\varphi\equiv 0$ outside $U$, we have that $(1-\varphi)u$ lies in the standard Sobolev space $H^{k,p}(X',d{\rm vol}_{\overline g})$;
			\item there holds
			\begin{equation}\label{sob:def:con}
				x^{\beta}\mathsf D^j\partial_z^\alpha(\varphi u)(x,z)\in L^p(X',d_+xd{\rm vol}_{g_F}), \quad j+|\alpha|\leq k.
			\end{equation}
			Here, $\mathsf D=x\partial_x$ is the Fuchs operator and $d_+x=x^{-1}dx$. 
		\end{itemize}
		Using duality and interpolation, we may define $\mathcal H_\beta^{\sigma,p}(X)$ for any $\sigma\in\mathbb R$. As usual, $\mathcal H_\beta^{\sigma,p}(X)$ is naturally a Banach space which is Hilbert for $p=2$. 
		For instance, when $k=0$ the corresponding norm {to the $p^{\rm th}$ power} reduces to the integral
		\begin{equation}\label{norm:near}
			\int|x^\beta u(x,z)|^pd_+xd{\rm vol}_{g_F}(z)
		\end{equation}
		near $q$. 
		These are the weighted Sobolev-Mellin spaces considered in \cite{schrohe1999ellipticity}, except that there they are labeled by 
		\begin{equation}\label{beta:gamma}
			\gamma=\frac{n}{2}-\beta.
		\end{equation} 
		In order to confirm the scale character of these spaces, we recall the relevant embedding theorem; see \cite[Remark 2.2]{coriasco2007realizations} and 	\cite[Corollary 2.5]{roidos2013cahn}.
		
		\begin{proposition}\label{sob:emb}
			One has a continuous embedding $\mathcal H_{\beta'}^{\sigma',p}(X)\hookrightarrow \mathcal H_\beta^{\sigma,p}(X)$ if $\beta'{\leq}\beta$ and $\sigma'{\geq}\sigma$, which is compact if the strict inequalities hold. {Also,
				if ${\sigma}>n/p$ then any $u\in \mathcal H_\beta^{\sigma,p}(X,g)$ is continuous in $X'$ and satisfies $u(x)=O(x^{-\beta})$ as $x\to 0$.}
		\end{proposition}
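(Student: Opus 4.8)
The plan is to reduce everything to the conical region $U'\cong\mathcal C^{(F,g_F)}_0$, since away from $q$ the space $\mathcal H_\beta^{\sigma,p}(X)$ coincides with the ordinary Sobolev space $H^{\sigma,p}$ on a compact piece of the smooth manifold $(X',\overline g)$, where the classical Rellich--Kondrachov and Sobolev--Morrey theorems apply verbatim. On $U'$ I would perform the logarithmic substitution $t=-\log x$, which turns the truncated cone $\{0<x<1\}\times F$ into the product half-cylinder $[0,\infty)\times F$ and records the Fuchs structure as $\mathsf D=x\partial_x=-\partial_t$, $d_+x=x^{-1}dx=dt$ and $x^\beta=e^{-\beta t}$. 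Setting $w=x^\beta u=e^{-\beta t}u$ and using the commutation identity $x^\beta\mathsf D^j\partial_z^\alpha u=\partial_z^\alpha(\mathsf D-\beta)^j w$, one checks that for integer $k$ the Sobolev--Mellin norm (\ref{sob:def:con}) near $q$ is equivalent to the standard $W^{k,p}([0,\infty)\times F)$ norm of $w$ (the operators $(\mathsf D-\beta)^j$, $0\le j\le k$, spanning the same filtration as the $\partial_t^j$); for non-integer $\sigma$ the same identification persists through the duality and interpolation used to define the scale.

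With this dictionary in hand the continuous embedding is almost immediate. For $t\ge 0$ and $\beta'\le\beta$ one has $e^{-\beta t}\le e^{-\beta' t}$ (equivalently $x^\beta\le x^{\beta'}$ on $\{x<1\}$), so the $\beta$-weighted integrand is pointwise dominated by the $\beta'$-weighted one, and hence the near-$q$ part of the $\mathcal H_\beta^{\sigma,p}$ norm is controlled by that of $\mathcal H_{\beta'}^{\sigma,p}$; the condition $\sigma'\ge\sigma$ then furnishes $\mathcal H_{\beta'}^{\sigma',p}\hookrightarrow\mathcal H_{\beta'}^{\sigma,p}$ by monotonicity of Sobolev norms in the order. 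On the exterior region the inclusion is the usual $H^{\sigma',p}\hookrightarrow H^{\sigma,p}$, and composing yields the desired bounded embedding.

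For compactness under the strict inequalities I would run the standard ``uniform smallness of tails'' argument. Fixing a bounded sequence $(u_m)$ in $\mathcal H_{\beta'}^{\sigma',p}(X)$ and a cutoff $\varphi$ as in the definition, on $\{0<x<\ve\}$ one estimates
\begin{equation*}
\int_{\{x<\ve\}}|x^\beta\mathsf D^j\partial_z^\alpha(\varphi u_m)|^p\,d_+x\,d{\rm vol}_{g_F}\ \le\ \ve^{(\beta-\beta')p}\,\|u_m\|_{\mathcal H_{\beta'}^{\sigma',p}}^p,
\end{equation*}
so that, thanks to $\beta-\beta'>0$, the tails are uniformly $O(\ve^{(\beta-\beta')p})$. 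On the fixed truncated region $\{0\le t\le -\log\ve\}\times F$ (of compact closure), together with the exterior piece, the strict inequality $\sigma'>\sigma$ and Rellich--Kondrachov produce a subsequence converging in $\mathcal H_\beta^{\sigma,p}$. A diagonal extraction over $\ve\to 0$ then yields a subsequence that is Cauchy, hence convergent, in $\mathcal H_\beta^{\sigma,p}(X)$, establishing compactness.

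Finally, for the Sobolev--Morrey statement I would invoke the uniform Sobolev embedding on the half-cylinder: since $\dim\big([0,\infty)\times F\big)=1+(n-1)=n$, when $\sigma>n/p$ one has $W^{\sigma,p}([0,\infty)\times F)\hookrightarrow C^0_b$, whence $w=x^\beta u$ is bounded and continuous. Translating back gives $|u(x,z)|\le C\,x^{-\beta}$, i.e.\ $u=O(x^{-\beta})$ as $x\to0$, together with continuity of $u$ on $U'$; continuity on the remaining compact part of $X'$ is the classical Sobolev--Morrey theorem. The main obstacle throughout is not the weight bookkeeping but securing the \emph{uniform} (rather than merely local) Sobolev and Rellich estimates on the noncompact cylinder $[0,\infty)\times F$; this is precisely where its product structure with a compact factor $F$ (equivalently, its bounded geometry) is essential, and it is also the point at which the fractional-order spaces must be matched carefully with their integer-order counterparts through interpolation.
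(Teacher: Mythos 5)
Your argument is correct, and it is essentially the standard one: the paper itself does not prove Proposition \ref{sob:emb} but defers to \cite[Remark 2.2]{coriasco2007realizations} and \cite[Corollary 2.5]{roidos2013cahn}, whose Mellin--Sobolev framework is exactly your cylindrical picture in disguise, since the substitution $t=-\log x$ converts the Mellin transform on $\Gamma_{-\beta}$ into the Fourier transform on the half-cylinder and the Fuchs filtration $(\mathsf D-\beta)^j$ into the $\partial_t^j$ filtration. Your three steps (pointwise weight domination for continuity, uniform tail smallness plus Rellich--Kondrachov with diagonal extraction for compactness, and the bounded-geometry Sobolev--Morrey embedding on $[0,\infty)\times F$ for the $O(x^{-\beta})$ bound) are precisely what those references carry out, so there is nothing to flag beyond the interpolation bookkeeping for non-integer $\sigma$, which you have already noted.
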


		It is clear that the Laplacian $\Delta_{g_s}$ defines a bounded map
		\begin{equation}\label{lap:cont:sob}
			\Delta_{g_s,\beta}:\mathcal H_\beta^{\sigma,p}(X)\to \mathcal H_{\beta+2s}^{\sigma-2,p}(X), 
		\end{equation} 
		and our primary concern here is to study its mapping properties.
		As already discussed in the Introduction, we should be aware that a key point in the analysis of an elliptic operator in a conformally conical manifold is that, differently from what happens in the smooth case, invertibility of its principal symbol does not suffice to make sure that a parametrix exists. In particular, it is not clear whether (\ref{lap:cont:sob}) is Fredholm for some value of the weight $\beta$. It turns out that this Fredholmness property  is  insensitive to the pair $(\sigma,p)$ but depends crucially on $\beta$ \cite{schrohe1999ellipticity}. Indeed, it turns out that this map is Fredholm for all but a discrete set of values of $\beta$, with the index possibly jumping only when $\beta$ reaches these exceptional values.  
		We now state a useful result that confirms this expectation for the map (\ref{lap:cont:sob}).
		For this, we introduce the quantity 
		\begin{equation}\label{exp:a}
			a=(n-2)s.
		\end{equation}
		If $s\neq 0$ then $a\neq 0$ as well if we further assume that $n\geq 3$. We then  denote by $I_a$ the open {interval with endpoints $a$ and $0$}.

		\begin{theorem}\label{self:adj}
			If $n\geq 4$ and $a\neq 0$ then the Laplacian map $\Delta_{g_s,\beta}$ in (\ref{lap:cont:sob}) is Fredholm of index $0$ whenever $\beta\in I_a$.  
		\end{theorem}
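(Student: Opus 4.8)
The plan is to exploit two structural facts emphasized in the Introduction: that the Fredholm property of (\ref{lap:cont:sob}) is insensitive to $(\sigma,p)$, so that one may work in the Hilbert sector $p=2$ with $\sigma$ arbitrary; and that, once Fredholmness is known throughout an interval of weights free of exceptional values, the index is locally constant there. Accordingly, I would first record the conormal structure of $\Delta_{g_s}$ near $q$. Writing $g_s=x^{2s-2}(\overline g+o(1))$ and combining the conformal transformation law of the Laplacian with the model $\overline g=dx^2+x^2g_F$, a direct computation puts the operator in Fuchs form
\begin{equation*}
	\Delta_{g_s}=x^{-2s}\big(\mathsf D^2+a\,\mathsf D+\Delta_{g_F}+o(1)\big),\qquad \mathsf D=x\partial_x,
\end{equation*}
so that its conormal symbol is $\xi_{\Delta_{g_s}}(z)=z^2+az+\Delta_{g_F}$, with $a=(n-2)s$ as in (\ref{exp:a}). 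The prefactor $x^{-2s}$ accounts precisely for the weight shift $\beta\mapsto\beta+2s$ in (\ref{lap:cont:sob}), and the $o(1)$ term is subordinate to the conormal symbol and hence does not affect the indicial analysis.

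Next I would locate the exceptional weights. With the convention $\Delta_{g_F}\phi_\mu=-\mu\phi_\mu$, $\mu\ge0$, the indicial roots attached to $\mu\in\mathrm{Spec}(\Delta_{g_F})$ are the exponents $\lambda$ for which $x^\lambda\phi_\mu$ lies in the kernel of the model operator, namely the roots of $\lambda^2+a\lambda-\mu=0$; and a homogeneous solution $x^\lambda$ enters the space $\mathcal H_\beta$ exactly at the threshold $\beta=-\lambda$ (the elementary convergence criterion behind (\ref{norm:near})). Thus (\ref{lap:cont:sob}) fails to be Fredholm only at the discrete set of weights $\beta=-\lambda$, equivalently where $\xi_{\Delta_{g_s}}$ is non-invertible on $\Gamma_\beta$. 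For $\mu=0$ the roots are $\lambda\in\{0,-a\}$, giving the two exceptional weights $\beta\in\{0,a\}$, the endpoints of $I_a$; while for $\mu>0$ one checks from $\sqrt{a^2+4\mu}>|a|$ that the two exceptional weights $\tfrac{a\mp\sqrt{a^2+4\mu}}{2}$ both fall strictly outside the closed interval bounded by $0$ and $a$. Hence $I_a$ contains no exceptional weight, so by the cited Fredholm theory \cite{schrohe1999ellipticity} the map $\Delta_{g_s,\beta}$ is Fredholm for every $\beta\in I_a$, with index constant along this connected interval.

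It remains to pin this constant index to $0$, and here I would use self-adjointness. Since $\Delta_{g_s}$ is formally self-adjoint with respect to $d\mathrm{vol}_{g_s}=x^{sn-1}(1+o(1))\,dx\,d\mathrm{vol}_{g_F}$, its formal adjoint relative to the reference Mellin pairing $\int uv\,d_+x\,d\mathrm{vol}_{g_F}$ is the conjugated operator $w\,\Delta_{g_s}\,w^{-1}$ with $w=x^{sn}(1+o(1))$. Combining this with the duality $(\mathcal H_\beta^{\sigma,2})^*\cong\mathcal H_{-\beta}^{-\sigma,2}$ and tracking the weight shifts produced by $w^{\pm1}$ identifies the Banach adjoint of $\Delta_{g_s,\beta}$, up to weight-preserving isomorphisms (the factors $1+o(1)$ being harmless), with the map $\Delta_{g_s,a-\beta}$. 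As adjunction reverses the sign of the index, this yields the reflection identity
\begin{equation*}
	\mathrm{ind}\,\Delta_{g_s,\beta}=-\,\mathrm{ind}\,\Delta_{g_s,a-\beta},\qquad \beta\in I_a.
\end{equation*}
The midpoint $\beta=a/2$ is a fixed point of $\beta\mapsto a-\beta$ and, by the previous paragraph, a non-exceptional weight lying in $I_a$; evaluating the identity there forces $\mathrm{ind}\,\Delta_{g_s,a/2}=0$, and constancy of the index on $I_a$ delivers the claim.

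The main obstacle I anticipate is not the formal computation but its rigorous underpinning within Schulze's cone algebra: one must know that the conormal symbol alone governs Fredholmness, so that the $o(1)$ perturbation is genuinely negligible and the exceptional set is exactly as computed, and one must justify that the Banach-space adjoint of $\Delta_{g_s,\beta}$ is realized by the conjugated differential operator on the dual Sobolev--Mellin scale, i.e.\ that the formal adjoint computation survives at the level of bounded Fredholm maps. The hypotheses $n\ge4$ and $a\neq0$ enter to place us within the scope of this calculus and to make $I_a$ a nondegenerate interval whose midpoint $a/2$ is an admissible reference weight.
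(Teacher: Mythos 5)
Your proposal is correct, and the first two steps (Fuchs form of $\Delta_{g_s}$, location of the indicial roots, Fredholmness with locally constant index away from them) coincide with the paper's, both resting on the same cone-calculus results of \cite{schrohe1999ellipticity,schulze1998boundary} recorded in Remark \ref{rem:conv}. Where you genuinely diverge is in pinning the index to $0$. The paper does this by passing to the unbounded core Laplacian on $\mathcal H^{0,2}_\beta(X)$, showing via the Mellin analysis of the critical strip that the asymptotics space $D_{\rm max}/D_{\rm min}$ is trivial at $\beta=ns/2$ (Corollary \ref{map:p:lap}), and invoking essential self-adjointness there (Remark \ref{self-ad-delta}); this is where the hypothesis $n\geq 4$ enters, through the estimate $|\delta^\pm_\mu|\geq (n-2)s/2\geq s$. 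You instead take the Banach adjoint of the bounded map (\ref{lap:cont:sob}) with respect to the Mellin pairing, identify it (up to weight-shifting and $1+o(1)$ isomorphisms) with $\Delta_{g_s,a-\beta}$, and evaluate the resulting reflection identity at the fixed point $\beta=a/2$. Note that your reference weight $a/2=(n-2)s/2$ differs from the paper's $ns/2$; both are legitimate, as they are the symmetry points of two different pairings. Your route buys two things: it avoids the $D_{\rm min}/D_{\rm max}$ analysis altogether (so the constraint $n\geq 4$ plays no visible role in your argument beyond matching the theorem's hypotheses), and it sidesteps a borderline subtlety in the paper's own proof, namely that for $n=4$ the weight $ns/2$ coincides with the endpoint $a$ of $I_a$, which is an exceptional weight for the bounded map, so the paper must pass through the closure of $I_a$ and the unbounded realization there. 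The price is the burden you correctly flag: one must verify that the Banach adjoint on the dual Sobolev--Mellin scale is realized by the conjugated differential operator $x^{sn}\Delta_{g_s}x^{-sn}$ modulo weight-preserving isomorphisms, which requires the regularity of the $o(1)$ terms guaranteed by Remark \ref{order} but is otherwise routine. What your approach does not recover, and the paper's does, is the sharper conclusion of essential self-adjointness of the core Laplacian at $\beta=ns/2$, which is of independent interest in the later applications.
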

		
		As already remarked, from this we can read off the Fredholm index of $\Delta_{g_s,\beta}$ as $\beta$ varies if a complete knowledge of the spectrum of $\Delta_{g_F}$ is available. As another useful application of Theorem \ref{self:adj}, we mention the following existence result, which is just a restatement of Fredholm alternative. 
		
		\begin{corollary}
			If $\beta\in I_{a}$, $a\neq 0$, then the map (\ref{lap:cont:sob}) is surjective if and only if it is injective. 
		\end{corollary}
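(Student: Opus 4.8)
The plan is to read the statement off directly from Theorem~\ref{self:adj} by means of the Fredholm alternative, so that the argument reduces to the linear algebra of index-zero Fredholm maps. First I would invoke Theorem~\ref{self:adj}: since $\beta\in I_a$, $a\neq 0$ (and the standing hypothesis $n\geq 4$ is in force), the operator $\Delta_{g_s,\beta}$ of (\ref{lap:cont:sob}) is Fredholm of index $0$. Spelled out, this supplies the three facts that drive everything: the kernel $\ker\Delta_{g_s,\beta}$ is finite dimensional, the range is closed in $\mathcal H_{\beta+2s}^{\sigma-2,p}(X)$, and the cokernel is finite dimensional with
\[
\dim\ker\Delta_{g_s,\beta}=\dim\operatorname{coker}\Delta_{g_s,\beta}.
\]

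Next I would use the closedness of the range to turn surjectivity into a statement about the cokernel: because $\operatorname{ran}\Delta_{g_s,\beta}$ is already closed, the map is onto precisely when $\operatorname{coker}\Delta_{g_s,\beta}=\{0\}$, that is, when $\dim\operatorname{coker}\Delta_{g_s,\beta}=0$. Feeding this into the index identity above yields the chain
\[
\text{surjective}\iff \dim\operatorname{coker}\Delta_{g_s,\beta}=0\iff \dim\ker\Delta_{g_s,\beta}=0\iff\text{injective},
\]
which is exactly the assertion. If one wants a more concrete reading, the cokernel may be identified, via the formal self-adjointness of $\Delta_{g_s}$ and the natural $L^2(d\mathrm{vol}_{g_s})$ pairing, with the finite-dimensional solution space of the homogeneous equation $\Delta_{g_s}w=0$ at the dual weight; the index-zero statement then says this solution space is equidimensional with $\ker\Delta_{g_s,\beta}$, and ``surjective iff injective'' becomes the fact that these two spaces vanish simultaneously.

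I do not expect any genuine obstacle here. The only point deserving care is that it is the \emph{closedness} of the range, not merely its density, that permits surjectivity to be detected by the cokernel; this is automatic for a Fredholm map, so no separate a~priori estimate is required. Equally essential is that Theorem~\ref{self:adj} delivers index \emph{exactly} $0$ rather than bare Fredholmness, since it is precisely the vanishing of the index that ties $\dim\ker$ to $\dim\operatorname{coker}$ and hence injectivity to surjectivity. Once Theorem~\ref{self:adj} is granted, the remaining content is the standard finite-dimensional Fredholm alternative.
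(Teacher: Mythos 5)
Your argument is correct and is exactly what the paper intends: the text introduces this corollary as ``just a restatement of Fredholm alternative,'' i.e.\ it follows immediately from the index-zero Fredholmness in Theorem~\ref{self:adj} via $\dim\ker=\dim\operatorname{coker}$ and closedness of the range. Your spelled-out version, including the remark that closedness (not mere density) of the range is what lets the cokernel detect surjectivity, is a faithful expansion of the paper's one-line justification.
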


		\begin{remark}\label{a:0}
			The case $a=0$ may also be treated by the method leading to Theorem \ref{self:adj}. It turns out that $\Delta_{g_0,\beta}$ is Fredholm for any $\beta$ such that $\beta^2\notin {\rm Spec}(\Delta_{g_F})$; see Remark \ref{a:0:2}. 
		\end{remark}

		\section{The proof of Theorem \ref{self:adj} (a sketch)}\label{proof:m:t}
		
		Our aim here is to sketch  the proof of Theorem \ref{self:adj}. This  may be confirmed in a variety of ways on inspection of standard sources; see for instance \cite{melrose1993atiyah,mazzeo1991elliptic,schulze1998boundary,lesch1997differential,egorov2012pseudo,melrose1996differential}, among others. However, since in these references the arguments leading to Theorem \ref{self:adj} appear embedded in rather elaborate theories, we include a sketch of the proof here in the setting of the Sobolev-Mellin spaces introduced above. 
		In fact, this section may be regarded as an essay on these fundamental contributions as applied to a rather simple situation.

		Since $\Delta_{g_s}$ is elliptic on $X'$, a local parametrix may be found in this region by standard methods. Thus, analyzing the mapping properties of $\Delta_{g_s}$ involves the consideration of a suitable notion of ellipticity in the conical region $U'$. 	Starting from (\ref{meg:ov:g}) and (\ref{met:edge}), we easily compute that 
		the Laplacian $\Delta_{g_s}$ satisfies 
		\begin{equation}\label{conorm}
			P:=x^{2s}\Delta_{g_s}|_{U'}=\mathsf D^2+a\mathsf D+\Delta_{g_F}+o(1), 
		\end{equation}
		where $\mathsf D=x\partial_x$. 
		As already noted, the needed ingredients to establish the mapping properties for $\Delta_{g_s}$ include not only its ellipticity when restricted to the smooth locus, but also the invertibility of the so-called {\em conormal symbol}, which is obtained by freezing the coefficients of $P$ at $x=0$, that is, passing to 
		\begin{equation}\label{con:symb:free}
			P_0=\mathsf D^2+a\mathsf D+\Delta_{g_F}, 
		\end{equation}
		and then applying the Mellin transform $\mathsf M$; see \cite{schrohe1999ellipticity,schulze1998boundary,egorov2012pseudo} and also (\ref{conor:symb}) below,  where this construction is actually applied to an appropriate conjugation of $P_0$.
		Recall that  $\mathsf M$
		is the linear map that to each well-behaved function $f:\mathbb R_+\to\mathbb C$ associates another function $\mathsf M(f):U_f\subset \mathbb C\to \mathbb C$ by means of 
		\[
		\mathsf M(f)(\zeta)=\int_0^{+\infty}f(x)x^{\zeta}d_+x, \quad d_+x=x^{-1}dx.
		\]
		For our purposes, it suffices to know that this transform  
		meets the following properties: 
		\begin{itemize} 
			\item For each $\theta\in\mathbb R$, the map 
			\[
			x^\theta L^2(\mathbb R_+,d_+x)\stackrel{\mathsf M}{\longrightarrow} 
			L^2(\Gamma_{{-\theta}}),
			\]
			is an isometry.
			{Here, $\Gamma_{{\alpha}}=\{\zeta\in\mathbb C;{\rm Re}\,\zeta={\alpha}\}$, $\alpha\in\mathbb R$, and $x^\theta L^2(\mathbb R_+,d_+x)$ is endowed with the inner product
				\begin{equation}\label{inner:prod}
					\langle u,v\rangle_{x^\theta L^2(\mathbb R_+,d_+x)}=\langle x^{-\theta}u,x^{-\theta}v\rangle_{L^2(\mathbb R_+,d_+x)}.
			\end{equation}}
			Moreover, each element $u$ in the image extends holomorphically to the half-space {$\{\zeta\in\mathbb C;{\rm Re}\,\zeta>{-\theta}\}$} (Notation: $u\in\mathscr H(\{{\rm Re}\,\zeta>{-\theta}\})$).
			\item $\mathsf M(\mathsf Df)(\zeta)=-\zeta\mathsf M(f)(\zeta)$.  
		\end{itemize}
		In particular, the conormal symbol
		\begin{equation}\label{con:symb:lap}
			\xi_{\Delta_{g_s}}(\zeta)=\zeta^2-a\zeta+\Delta_{g_F}
		\end{equation}
		is obtained by Mellin tranforming (\ref{con:symb:free}). Note that this is a polynomial function with coefficients in the space of differential operators on the link $(F,g_F)$.  
		
		\begin{definition}\label{def:ellip:con}
			The Laplacian $\Delta_{g_s}$ is {\em elliptic} (with respect to some $\beta\in\mathbb R$) if 
			\[
			\xi_{\Delta_{g_s}}(\zeta):H^{\sigma,p}(F,d{\rm vol}_{g_F})\to H^{\sigma-2,p}(F,d{\rm vol}_{g_F})
			\]
			is invertible for any $\zeta\in\Gamma_\beta$. Here, $H^{\sigma,p}$ denotes the standard Sobolev scale.
		\end{definition}

		\begin{remark}\label{rem:gen:geo}
			Inherent in the discussion above is the fact that the Laplacian can be written as a polynomial in $\mathsf D$ in the conical region. More generally, we may consider any elliptic operator $D$ satisfying, as $x\to 0$,  
			\[
			x^\nu D|_{U'}=\sum_{i=0}^{m}A_i(x)\mathsf D^i+o(1),\quad \nu>0,
			\] 
			where each $A_i(x)$ is a differential operator of order at most $m-i$ acting on (sections of a vector bundle over) $F$ \cite{schulze1998boundary,lesch1997differential}. 
			Definition \ref{def:ellip:con} then applies to the corresponding 
			conormal symbol, which  is 		\[
			\xi_D(\zeta)=\sum_{i=0}^m(-1)^iA_i(0)\zeta^i.
			\]
			Besides the Laplacian, in next section we consider another most honorable example, namely, the Dirac operator acting on spinors. 
		\end{remark}

		Armed with this notion of ellipticity, we may setup an appropriate pseudo-differential calculus that enables the construction of a parametrix for $\Delta_{g_s}$ in the Sobolev-Mellin scale $\mathcal H^{\sigma,p}_\beta(X)$; the quite delicate argument can be found in \cite{schulze1998boundary,egorov2012pseudo}. As in the smooth case, this turns out to be formally equivalent to the assertion that the map (\ref{lap:cont:sob}) is Fredholm. 
		
		\begin{remark}\label{rem:conv}
			The converses in the chain of implications above also hold true, so that (\ref{lap:cont:sob}) fails to be Fredholm precisely at those $\beta$ for which the invertibility condition fails. More precisely, if we set 
			\begin{equation}\label{ind:roots:up}
				\Xi_{\beta}:=\left\{\zeta\in\mathbb C;\zeta^2-a\zeta-\mu=0, \mu\in{\rm Spec}(\Delta_{g_F})\right\}\cap\Gamma_{\beta}.
			\end{equation}
			then the Laplacian map in (\ref{lap:cont:sob}) fails to be  Fredholm if and only if $\Xi_{\beta}\neq\emptyset$. This takes place along the discrete set formed by those $\beta=\beta_\mu$ satisfying the {\em indicial equation}
			\[
			\beta_\mu^2-a\beta_\mu-\mu=0, \quad \mu\in {\rm Spec}(\Delta_{g_F}),
			\] 
			and a further argument shows that the corresponding jump in the Fredholm index equals 
			\begin{equation}\label{jump:fac}
				\pm\dim\ker \xi_{\Delta_{g_s}}(\beta_\mu)=\pm\dim\ker (\Delta_{g_F}+\mu).
			\end{equation} 
		\end{remark}	
		
		From the  previous remark, a first step toward computing the Fredholm index of $\Delta_{g_s,\beta}$ as $\beta$  varies involves first determining it at a single value of $\beta$. 
		A possible approach to this goal is to 
		consider the {\em core} Laplacian
		\begin{equation}\label{core:lap}
			(\Delta_{g_s},C^\infty_c(X')):C^\infty_c(X')\subset \mathcal H_\beta^{0,2}(X)\to \mathcal H_\beta^{0,2}(X),
		\end{equation}
		a  densely defined operator  whose closure is the operator $(\Delta_{g_s},D_{\rm min}(\Delta_{g_s}))$, with domain $D_{\rm min}(\Delta_{g_s})$  formed by those
		$u\in \mathcal H_\beta^{0,2}(X)$ such that there exists $\{u_n\}\subset C^\infty_c(X')$ with  $u_n{\to} u$  
		{and} $\{\Delta_{g_s}u_n\}$ is Cauchy in $\mathcal H_\beta^{0,2}(X)$.
		Also, we may consider $(\Delta_{g_s},D_{\rm max}(\Delta_{g_s}))$, where 
		\[
		D_{\rm max}(\Delta_{g_s})=\left\{u\in \mathcal H_\beta^{0,2}(X);\Delta_{g_s}u\in \mathcal H_\beta^{0,2}(X)\right\}.
		\] 
		Regarding these notions, the following facts are well-known.
		
		\begin{itemize}
			\item $D_{\rm min}(\Delta_{g_s})\subset D_{\rm max}(\Delta_{g_s})$;
			\item If $(\hat\Delta_{g_s},{\rm Dom}(\hat\Delta_{g_s}))$ is a closed extension of $(\Delta_{g_s},C^\infty_c(X'))$ then
			\[
			D_{\rm min}(\Delta_{g_s})\subset {\rm Dom}(\hat\Delta_{g_s})\subset  D_{\rm max}(\Delta_{g_s}).
			\]  
		\end{itemize}
		Hence, in order to understand the set of closed extensions, we need to look at the subspaces of  	
		the {\em asymptotics space} 
		\begin{equation}\label{as:space}
			\mathcal Q(\Delta_{g_s}):=\frac{D_{\rm max}(\Delta_{g_s})}{D_{\rm min}(\Delta_{g_s})}.
		\end{equation}
		Thus, $\mathcal Q(\Delta_{g_s})=\{0\}$ implies that the Laplacian {has a unique closed extension and hence the associated map (\ref{lap:cont:sob}) is Fredholm. In particular, it is essentially self-adjoint (hence with a vanishing index) whenever it is symmetric}. From this, the remaining values of the index as $\beta$ varies  may be determined by means of the jump factors in (\ref{jump:fac}).

		The properties of the Mellin transform mentioned above suggest to work with the ``Mellin'' volume element 
		\[
		d{\rm vol}_{\mathsf M}=x^{-1}dxd{\rm vol}_{g_F}
		\]
		instead of the  volume element $x^{n-1}dxd{\rm vol}_{g_F}$ associated to $\overline g$. 
		This is implemented by working ``downstairs'' in the  diagram below, where $\tau=x^{\frac{n}{2}}$ is unitary and $\Delta^\tau_{g_s}=\tau\Delta_{g_s}\tau^{-1}$:
		\begin{equation}\label{diag}
			\begin{array}{ccc}
				D_{\rm max}(\Delta_{g_s})\subset \mathcal H_\beta^{0,2}(X) & \xrightarrow{\,\,\,\,\Delta_{g_s}\,\,\,\,} & 	\mathcal H_\beta^{0,2}(X) \\
				\tau	\Big\downarrow &  & \Big\downarrow \tau \\
				D_{\rm max}(\Delta^\tau_{g_s})\subset	{x^{\frac{n}{2}-\beta}}	L^2(X',d{\rm vol}_{\mathsf M}) & \xrightarrow{\,\,\,\,\Delta^\tau_{g_s}\,\,\,\,} & 	{x^{\frac{n}{2}-\beta}}	L^2(X',d{\rm vol}_{\mathsf M}) 
			\end{array}
		\end{equation}
		
		{
			\begin{remark}\label{self-ad-delta}
				It is immediate to check that, near the singularity,
				\[
				\langle \Delta_{g_s}u,v\rangle_{\mathcal H^{0,2}_\beta(X)}=\int x^{2\beta-ns}v\Delta_{g_s}u\, d{\rm vol}_{g_s}, 
				\]   
				so that the horizontal maps in (\ref{diag}) define symmetric operators if and only if $\beta=ns/2$. Notice that the same conclusion holds true for any operator which is formally self-adjoint with respect to $d{\rm vol}_{g_s}$.
			\end{remark}
		}
		
		Let $u\in D_{\rm max}(\Delta_{g_s})$. Thus, $v:=\tau u\in D_{\rm max}(\Delta^\tau_{g_s})$ satisfies $x^{\beta-n/2}v\in L^2(X',d{\rm vol}_{\mathsf M})$, so that 
		$\mathsf M(v)\in{\mathscr H}(\{{\rm Re}\,\zeta>\beta-n/2\})$. On the other hand, if 
		\[
		P_0^\tau:=\tau P_0\tau^{-1}=
		\mathsf D^2+(a-n)\mathsf D+\frac{n(n-2a)}{4}+\Delta_{g_F}, 
		\]
		then $w:=P_0^\tau v$ satisfies $x^{-2s+\beta-n/2}w=x^{\beta-n/2}\tau\Delta_{g_s}u\in L^2(X',d{\rm vol}_{\mathsf M})$, so that $\mathsf M(w)\in{\mathscr H}(\{{\rm Re}\,\zeta> -2s+\beta-n/2\})$.
		By taking Mellin transform,
		\[
		\mathsf M(w)(\zeta,z,y)=\xi_{\Delta^\tau_{g_s}}(\zeta)\mathsf M(v)(\zeta,z,y), 
		\]
		where 
		\begin{equation}\label{conor:symb}
			\xi_{\Delta^\tau_{g_s}}(\zeta)=	\zeta^2+(n-a)\zeta+\frac{n(n-2a)}{4}+\Delta_{g_F}
		\end{equation}
		is the conormal symbol of $\Delta^\tau_{g_s}$.
		The conclusion is that, at least formally, 
		\begin{equation}\label{formally}
			\mathsf M(v)(\zeta,z,y)=\xi_{\Delta^\tau_{g_s}}^{-1}(\zeta)\mathsf M(w)(\zeta,z,y), 
		\end{equation}
		but we should properly handle the zeros of $\xi_{\Delta^\tau_{g_s}}$
		located within the critical strip $\Gamma_{-2s+\beta-n/2,\beta-n/2}$, which we may gather together in the {\em asymptotics set}\footnote{Note that $\Lambda_\beta^\tau\subset\mathbb R$ by (\ref{roots}) and the fact that ${\rm Spec}(\Delta_{g_F})\subset[0,+\infty)$.}
		\[
		\Lambda^\tau_{\beta}:=\left\{\zeta\in\mathbb C;Q_\mu(\zeta)=0,\mu\in{\rm Spec}(\Delta_{g_F})\right\}\cap \Gamma_{-2s+\beta-n/2,\beta-n/2}.
		\]
		Here,  $\Gamma_{c,c'}=\{\zeta\in \mathbb C; c<{\rm Re}\,\zeta<c'\}$ for $c<c'$ and 
		\[
		Q_\mu(\zeta)=\zeta^2+(n-a)\zeta+\frac{n(n-2a)}{4}-\mu.
		\] 
		%		In particular, $\Gamma_{-2s+\beta-n/2,\beta-n/2}=\emptyset$ if $s\leq 0$; compare with Remark \ref{complete}.
		%		Thus, the core Laplacian is essentially self-adjoint in this case and we may assume that $s>0$. 
		
		Since the roots of $Q_\mu$ are explicitly given by 
		\begin{equation}\label{roots}
			\frac{a-n}{2}\pm \delta^\pm_{\mu},\quad 	\delta^\pm_{\mu}=\pm\frac{1}{2}\sqrt{a^2+4\mu},
		\end{equation}
		we may alternatively consider  
		\[
		\tilde\Lambda^{\tau,\pm}_{\beta}=\left\{\mu\in{\rm Spec}(\Delta_{g_F});\delta_{\mu}^{\pm}\in\Gamma_{-2s+\beta-a/2,\beta-a/2}\right\}.
		\]	
		After applying Mellin inversion to (\ref{formally}) and using the appropriate pseudo-differential calculus \cite{lesch1997differential,schrohe1999ellipticity,schulze1998boundary}, we obtain
		\begin{equation}\label{expansion}
			v-w=\sum_{\mu\in\tilde\Lambda^{\tau,\pm}_{\beta}}A_\mu(x,z,y),
		\end{equation}	
		where the right-hand side represents a generic element in the asymptotics space $\mathcal Q(\Delta_{g_s})$. Thus, the elements in $\tilde\Lambda_\beta^{\tau,\pm}$ constitute the obstruction to having $v=w$ (and hence, $\mathcal Q(\Delta_{g_s})=\{0\}$).
		From this we easily derive the 
		next results.
		
		\begin{theorem}\label{self:crit}
			The core Laplacian {has a unique closed extension} whenever  $\tilde\Lambda^{\tau,\pm}_{\beta}=\emptyset$. 
		\end{theorem}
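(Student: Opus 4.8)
The plan is to read the statement off the dichotomy recorded just above it. Since every closed extension of the core Laplacian is squeezed between $D_{\rm min}(\Delta_{g_s})$ and $D_{\rm max}(\Delta_{g_s})$, it suffices to prove that the hypothesis $\tilde\Lambda^{\tau,\pm}_\beta=\emptyset$ forces the asymptotics space $\mathcal Q(\Delta_{g_s})=D_{\rm max}(\Delta_{g_s})/D_{\rm min}(\Delta_{g_s})$ to vanish. Because interior ellipticity on $X'$ already controls everything away from $q$, the whole question is local near the tip, and I would carry out the analysis downstairs in diagram (\ref{diag}): fix an arbitrary $u\in D_{\rm max}(\Delta_{g_s})$ and set $v=\tau u$, $w=P_0^\tau v$.

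First I would record the two holomorphy ranges already derived from the isometry property of the Mellin transform, namely $\mathsf M(v)\in\mathscr H(\{{\rm Re}\,\zeta>\beta-n/2\})$ and $\mathsf M(w)\in\mathscr H(\{{\rm Re}\,\zeta>-2s+\beta-n/2\})$, together with the conormal identity $\mathsf M(w)(\zeta)=\xi_{\Delta^\tau_{g_s}}(\zeta)\mathsf M(v)(\zeta)$. Inverting gives (\ref{formally}), a representation of $\mathsf M(v)$ that is a priori legitimate only on the smaller of these two half-spaces. The heart of the argument is then to move the contour of Mellin inversion across the critical strip $\Gamma_{-2s+\beta-n/2,\beta-n/2}$: by the residue theorem each zero of $\xi_{\Delta^\tau_{g_s}}$ encountered inside the strip contributes a term, and by (\ref{roots}) these zeros are exactly the roots of $Q_\mu$ indexed by $\mu\in{\rm Spec}(\Delta_{g_F})$, so that the residues collected there assemble precisely into the asymptotic terms $A_\mu$ of (\ref{expansion}) labelled by $\mu\in\tilde\Lambda^{\tau,\pm}_\beta$.

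When $\tilde\Lambda^{\tau,\pm}_\beta=\emptyset$ the strip contains no such zeros, so $\xi_{\Delta^\tau_{g_s}}^{-1}\mathsf M(w)$ is holomorphic across it and $\mathsf M(v)$ extends onto the half-space $\{{\rm Re}\,\zeta>-2s+\beta-n/2\}$ on which $\mathsf M(w)$ is holomorphic. Reading this back through the Mellin isometry shows that $v$---and hence $u$---acquires exactly the improved decay that characterizes $D_{\rm min}(\Delta_{g_s})$; consequently $D_{\rm max}(\Delta_{g_s})\subset D_{\rm min}(\Delta_{g_s})$, the asymptotics space is trivial, and the core Laplacian admits a unique closed extension.

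The hard part is not the residue bookkeeping but upgrading the formal identity (\ref{formally}) into a rigorous statement for the genuine operator, whose conical expansion (\ref{conorm}) differs from the frozen model $P_0^\tau$ by the $o(1)$ remainder discarded in (\ref{con:symb:free}). Justifying that this perturbation neither destroys the holomorphic extension nor enlarges the span of the $A_\mu$---equivalently, that the parametrix manufactured from $\xi_{\Delta^\tau_{g_s}}^{-1}$ within the cone algebra controls the error modulo the finite-dimensional asymptotics space---is precisely the delicate pseudo-differential analysis of \cite{lesch1997differential,schrohe1999ellipticity,schulze1998boundary}, which I would invoke rather than reprove. I would likewise want to verify that when the strip does meet a root the corresponding $A_\mu$ is genuinely nonzero modulo $D_{\rm min}(\Delta_{g_s})$, so that (\ref{expansion}) identifies $\mathcal Q(\Delta_{g_s})$ faithfully and the emptiness criterion is sharp rather than merely sufficient.
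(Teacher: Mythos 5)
Your proposal is correct and follows essentially the same route as the paper, whose ``proof'' of Theorem \ref{self:crit} is precisely the discussion preceding it: the Mellin-transform holomorphy ranges for $v$ and $w$, the formal inversion (\ref{formally}), the contour shift across the critical strip picking up the residues at the roots of $Q_\mu$ that assemble into the expansion (\ref{expansion}) spanning $\mathcal Q(\Delta_{g_s})$, and the conclusion that $\tilde\Lambda^{\tau,\pm}_\beta=\emptyset$ forces $D_{\rm max}(\Delta_{g_s})=D_{\rm min}(\Delta_{g_s})$. You also defer exactly the same technical point as the paper does --- justifying that the $o(1)$ perturbation of the frozen model $P_0^\tau$ does not alter the asymptotics space --- to the cone-calculus references \cite{lesch1997differential,schrohe1999ellipticity,schulze1998boundary}.
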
 
		
		\begin{corollary}\label{map:p:lap}
			Assume that $n\geq 4$. Then the core Laplacian has a unique closed, Fredholm extension if either i) $s\leq 0$  or ii) $s>0$ and  $\beta=ns/2$. {In both cases, it is essentially self-adjoint for $\beta=ns/2$.}
		\end{corollary}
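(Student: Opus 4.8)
The plan is to deduce everything from Theorem \ref{self:crit}: that result guarantees a unique closed extension whenever the asymptotics set $\tilde\Lambda^{\tau,\pm}_\beta$ is empty, so the whole matter reduces to an elementary inspection of the real numbers $\delta_\mu^\pm=\pm\frac12\sqrt{a^2+4\mu}$ of (\ref{roots}) against the critical strip $\Gamma_{-2s+\beta-a/2,\,\beta-a/2}$. Since ${\rm Spec}(\Delta_{g_F})\subset[0,+\infty)$ and $a=(n-2)s\neq 0$ (using $n\geq 3$ and $s\neq 0$), every $\delta_\mu^\pm$ is real with $|\delta_\mu^\pm|\geq |a|/2=(n-2)|s|/2$, the extremal value being attained only at the eigenvalue $\mu=0$. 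Hence the $\delta_\mu^\pm$ avoid the open symmetric gap $(-(n-2)|s|/2,\,(n-2)|s|/2)$ altogether, and $\tilde\Lambda^{\tau,\pm}_\beta=\emptyset$ as soon as the open real interval cut out by the critical strip is contained in this gap.

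First I would treat the weight $\beta=ns/2$, which is the heart of both cases. Substituting $a=(n-2)s$, the two endpoints $\beta-a/2$ and $-2s+\beta-a/2$ of the strip collapse to $s$ and $-s$, so the relevant interval is exactly $(-|s|,|s|)$, of length $2|s|$ and centered at $0$. Comparing with the forbidden-free gap, emptiness of $\tilde\Lambda^{\tau,\pm}_\beta$ amounts to $|s|\leq (n-2)|s|/2$, i.e. to $n\geq 4$. This is the only place where the dimensional threshold enters, and it is sharp: for $n=3$ the value $\delta_0^\pm=\pm|s|/2$ already falls inside $(-|s|,|s|)$, so a genuine asymptotic term survives. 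With $\tilde\Lambda^{\tau,\pm}_\beta=\emptyset$ in hand, Theorem \ref{self:crit} yields a unique closed extension, whence $\mathcal Q(\Delta_{g_s})=\{0\}$ and the Fredholmness of (\ref{lap:cont:sob}) follows as explained before the statement. Finally, by Remark \ref{self-ad-delta} the operator is symmetric exactly when $\beta=ns/2$, and a symmetric operator admitting a unique closed extension is essentially self-adjoint; this gives the last assertion.

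It remains to organize the two cases according to Remark \ref{complete}. For $s>0$ the metric is incomplete and one is forced to the distinguished weight, so case (ii) is precisely the computation above at $\beta=ns/2$. For $s\leq 0$ the metric $g_s$ is complete and, as one checks from the volume element, the canonical Hilbert space $\mathcal H^{0,2}_{ns/2}(X)$ coincides with $L^2(X',d{\rm vol}_{g_s})$; the same computation gives $\tilde\Lambda^{\tau,\pm}_{ns/2}=\emptyset$, and one may alternatively read off essential self-adjointness from completeness via the classical Gaffney-type argument for $(\Delta_{g_s},C^\infty_c(X'))$. For $s<0$ and $n\geq 5$ the central gap is strictly wider than the strip, so the conclusion persists for an open interval of weights about $ns/2$; this is what allows case (i) to be phrased without singling out the weight.

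The main obstacle, really the only subtle point, is bookkeeping the open-versus-closed nature of the strip at the extremal eigenvalue $\mu=0$ when $n=4$: there $\delta_0^\pm=\pm|s|$ lands exactly on the boundary of $(-|s|,|s|)$, so one must verify that these boundary roots do not contribute. They do not, since $\Gamma_{c,c'}$ is an open strip and the expansion (\ref{expansion}) collects only the poles lying in its interior. Once this is settled the argument is purely computational; the degenerate case $s=0$ (where $a=0$ and the strip has zero width) falls outside the present regime and must be handled separately, as indicated in Remark \ref{a:0}.
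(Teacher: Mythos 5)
Your treatment of case (ii) ($s>0$, $\beta=ns/2$) is correct and is essentially the paper's own argument: at this weight the interval cut out for the $\delta_\mu^\pm$ is $(-s,s)$, while $|\delta_\mu^\pm|\geq (n-2)s/2\geq s$ for $n\geq 4$, and the borderline roots coming from $\mu=0$ when $n=4$ are excluded because the strip is open. Deducing essential self-adjointness from Remark \ref{self-ad-delta} together with uniqueness of the closed extension also matches the paper.

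There is, however, a genuine gap in case (i). The corollary asserts the conclusion for $s\leq 0$ and \emph{every} weight $\beta$, whereas your argument only establishes it at $\beta=ns/2$ (and, for $s<0$ and $n\geq 5$, on some open interval of weights around it) --- you even hedge to exactly that effect in your last paragraph. The source of the trouble is the claim that at $\beta=ns/2$ the relevant interval is ``exactly $(-|s|,|s|)$'': the strip $\Gamma_{c,c'}$ is only defined (and nonempty) for $c<c'$, and here $c=-2s+\beta-a/2$, $c'=\beta-a/2$, so $c<c'$ holds precisely when $s>0$. For $s\leq 0$ the critical strip is therefore \emph{empty for every} $\beta$ --- equivalently, the half-plane of holomorphy of $\mathsf M(w)$ contains that of $\mathsf M(v)$, so no poles of $\xi_{\Delta^\tau_{g_s}}^{-1}$ are crossed and no asymptotic terms can arise. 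This is the paper's one-line proof of case (i); it gives $\tilde\Lambda^{\tau,\pm}_{\beta}=\emptyset$ for all $\beta$ with no dimensional restriction and no need for the completeness/Gaffney detour or the identification of $\mathcal H^{0,2}_{ns/2}(X)$ with $L^2(X',d{\rm vol}_{g_s})$. Relatedly, your ``sharpness'' remark about $n=3$ is only valid for $s>0$; for $s<0$ and $n=3$ nothing survives, again because the strip is empty.
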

		
		\begin{proof}
			The case $s\leq 0$ follows from the fact that $\Gamma_{-2s+\beta-n/2,\beta-n/2}=\emptyset$, which clearly implies that $\tilde\Lambda^{\tau,\pm}_{\beta}=\emptyset$ as well. 
			If $s>0$ then  
			\[
			|\delta^\pm_\mu|\geq\frac{a}{2}=\frac{(n-2)s}{2}\geq s,
			\]
			so that 
			\[
			\tilde\Lambda^{\tau,\pm}_{ns/2}=\left\{\mu\in{\rm Spec}(\Delta_{g_F});\delta_{\mu}^{\pm}\in\Gamma_{-s,s}\right\}=\emptyset
			\] 
			indeed. {The last assertion follows from Remark \ref{self-ad-delta}}.
		\end{proof}

		In each case of Corollary \ref{map:p:lap}, the corresponding map (\ref{lap:cont:sob}) is Fredholm  and  this turns out to be a crucial step in the proof of Theorem \ref{self:adj}. Indeed, we already know that  Fredholmness and the associated index do not depend on the pair $(\sigma,p)$ but only on $\beta$. The key point now is that, as already explained  in the slightly different (but equivalent) setting of the discussion surrounding Remark \ref{rem:conv},  the strategy to preserve Fredholmness as $\beta$ varies involves precluding the crossing of  zeros of $\xi_{\Delta_{g_s}^\tau}$ through the critical line $\Gamma_{\beta-n/2}$ (this is what ellipticity is all about). Precisely, we consider
		\[
		\Xi^\tau_{\beta}:=\left\{\zeta\in\mathbb C;Q_\mu(\zeta)=0, \mu\in{\rm Spec}(\Delta_{g_F})\right\}\cap\Gamma_{\beta-n/2},
		\] 	
		and the relevant result is that $\Delta_{g_s}$ remains Fredholm with the {\em same} index as long as $\Xi^\tau_{\beta}=\emptyset$; see  \cite[Section 3]{schrohe1999ellipticity} or \cite[Subsection 2.4.3]{schulze1998boundary}. Certainly, this is the case for all $\beta\in I_a$, $a\neq 0$. Since (the closure of) this interval always contains $ns/2$, the proof of Theorem \ref{self:adj} follows from Corollary \ref{map:p:lap} and the remarks above.

		\begin{remark}\label{a:0:2}
			The case $a=0$ follows by a similar argument observing that the roots of $Q_\mu(\zeta)=0$ are 
			$
			-{n}/{2}\pm\sqrt{\mu},
			$
			so that Fredholmness fails whenever $\beta=\pm\sqrt{\mu}$; compare with Remark \ref{a:0} and  \cite[Theorem 6.2]{lockhart1985elliptic}.
		\end{remark}

		\begin{remark}\label{upstairs}
			We emphasize that the authors in \cite{schrohe1999ellipticity} and \cite{schulze1998boundary} work ``upstairs'' in respect to the diagram (\ref{diag}), that is, before applying the conjugation $\tau=x^{n/2}$, so instead of $\Xi^\tau_{\beta}$ they consider $\Xi_{\beta}$ as in (\ref{ind:roots:up}). 
			Notice that the polynomial equation here is the Mellin transform of $P_0$ whereas the critical line is shifted to the right by $n/2$. It is immediate to check that both approaches produce the same numerical results for the Fredholmness of $\Delta_{g_s,\beta}$.
		\end{remark}

		\section{The Dirac operator}\label{map:dirac}
		We now illustrate how flexible the theory described in the previous section is by explaining how it may be adapted to establish the mapping properties of the Dirac operator
		\begin{equation}\label{dirac:map}
			{\dirac}_{g_s}:\mathcal H^{s,p}_\beta(S_X)\to \mathcal H^{s-1,p}_{\beta+1}(S_X)
		\end{equation}
		in the appropriate scale of Sobolev-Mellin spaces. Here, $X$ is assumed to be spin and $S_X$ is the corresponding spinor bundle (associated to $g_s$). As usual, we first consider the core Dirac operator
		\begin{equation}\label{dirac:map:c}
			({\dirac}_{g_s},C^\infty_0(S_X)):C^\infty_0(S_X)\subset \mathcal H^{0,2}_\beta(S_X)\to H^{0,2}_\beta(S_X),
		\end{equation}
		and our aim is to give conditions on $\beta$ to make sure that the associated asymptotics space is trivial. 
		
		It follows from \cite[Lemma 2.2]{albin2016index} that, in the conical region,
		\[
		{\dirac}_{\overline g}={\mathfrak c}(\partial_x)\left(\partial_x+\frac{n-1}{2x}+\frac{1}{x}{\dirac}_F\right)+O(1),
		\]
		where $\mathfrak c$ is Clifford product and ${\dirac}_F$ is the Dirac operator of the spin manifold $(F,g_F)$. From \cite[Proposition 2.31]{bourguignon2015spinorial}, we thus obtain 
		\[
		\dirac_{g_s}=x^{1-s}{\mathfrak c}(\partial_x)\left(\partial_x+\frac{\hat a}{x}+\frac{1}{x}{\dirac}_F\right)+O(1), \quad \hat a=\frac{(n-1)s}{2},
		\]
		so that 
		\[
		\mathscr P:=x^s{\dirac}_{g_s}={\mathfrak c}(\partial_x)\mathscr P_0+O(x),
		\]
		where 
		\[ 
		\mathscr P_0=\mathsf D+\hat a+{\dirac}_F
		\]
		is the conormal symbol.
		By working ``downstairs'', we get
		\[
		\mathscr P_0^\tau:=\tau\mathscr P_0\tau^{-1}=\mathsf D+\hat a-\frac{n}{2}+ {\dirac}_F,
		\] 
		and after Mellin transforming this we see that the corresponding asymptotics  set is 
		\[
		\Theta^\tau_\beta:=\left\{\zeta\in \mathbb C; \zeta+\frac{n}{2}-\hat a-\vartheta=0,\vartheta\in{\rm Spec}({\dirac}_F)\right\}\cap\Gamma_{-s+\beta-n/2,\beta-n/2}.  
		\]
		By arguing exactly as above, we easily obtain the following result.
		
		\begin{theorem}\label{albin:mellin}
			The core Dirac (\ref{dirac:map:c}) { has a unique closed extension} whenever $\Theta^\tau_\beta=\emptyset$. In particular, this happens if either i) $s\leq 0$ or ii) $s>0$, $\beta=n/2$ 
			and the ``geometric Witt assumption''
			\begin{equation}\label{geo:witt}
				{\rm Spec}({\dirac}_F)\cap\left(\frac{n}{2}-\hat a-s,\frac{n}{2}-\hat a\right)=\emptyset
			\end{equation}
			is satisfied. In this latter case, the Dirac map (\ref{dirac:map})
			is Fredholm of index $0$ if $n/2-s< \beta<n/2$, {with the core Dirac being essentially self-adjoint for $s=1$}. 
		\end{theorem}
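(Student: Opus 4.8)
The plan is to transcribe, almost verbatim, the Mellin-transform argument that yielded Theorem~\ref{self:crit} and Corollary~\ref{map:p:lap} for the Laplacian, since all the required ingredients for the Dirac operator have already been recorded: the conjugated conormal symbol $\mathscr P_0^\tau=\mathsf D+\hat a-n/2+{\dirac}_F$, the strip $\Gamma_{-s+\beta-n/2,\beta-n/2}$, and the asymptotics set $\Theta^\tau_\beta$. First I would establish the general criterion. Let $u\in D_{\rm max}({\dirac}_{g_s})$ and put $v=\tau u$; arguing exactly as in the passage leading from (\ref{formally}) to (\ref{expansion}), Mellin inversion realizes a generic element of the asymptotics space as a sum of contributions indexed by the zeros of $\mathscr P_0^\tau$ lying in the open strip $\Gamma_{-s+\beta-n/2,\beta-n/2}$. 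Because ${\dirac}_F$ is (formally) self-adjoint on the closed manifold $F$, its spectrum is real, so these zeros $\zeta=\hat a-n/2+\vartheta$, $\vartheta\in{\rm Spec}({\dirac}_F)$, are real and are precisely the points collected in $\Theta^\tau_\beta$. Hence $\Theta^\tau_\beta=\emptyset$ forces the asymptotics space $\mathcal Q({\dirac}_{g_s})=D_{\rm max}/D_{\rm min}$ to vanish, which is the asserted unique closed extension.

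Next I would dispose of the two special cases. For $s\leq 0$ one has $-s\geq 0$, so the left endpoint $-s+\beta-n/2$ of the strip is $\geq$ its right endpoint $\beta-n/2$; the open strip is therefore empty for every $\beta$, whence $\Theta^\tau_\beta=\emptyset$. For $s>0$ and $\beta=n/2$ the strip collapses to $\{-s<{\rm Re}\,\zeta<0\}$, and a real zero $\hat a-n/2+\vartheta$ falls inside it exactly when $n/2-\hat a-s<\vartheta<n/2-\hat a$; thus $\Theta^\tau_{n/2}=\emptyset$ is literally the geometric Witt assumption (\ref{geo:witt}).

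For the Fredholm statement on the open interval I would switch to the honest map (\ref{dirac:map}) and use the Laplacian relative-index principle discussed around Remark~\ref{rem:conv}: Fredholmness persists, with a locally constant index, as long as no zero of $\mathscr P_0^\tau$ crosses the critical line $\Gamma_{\beta-n/2}$. Such a crossing occurs precisely at $\beta=\hat a+\vartheta$, i.e. when $\beta-\hat a\in{\rm Spec}({\dirac}_F)$. As $\beta$ runs through $(n/2-s,n/2)$ the quantity $\beta-\hat a$ runs through $(n/2-\hat a-s,n/2-\hat a)$, which by (\ref{geo:witt}) meets ${\rm Spec}({\dirac}_F)$ nowhere; hence no crossing takes place and the index is constant on $(n/2-s,n/2)$.

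Finally, to pin this constant to $0$ and obtain essential self-adjointness, I would locate the symmetric weight. By the principle recorded in Remark~\ref{self-ad-delta}, ${\dirac}_{g_s}$ is symmetric on $\mathcal H^{0,2}_\beta$ precisely when $\beta=ns/2$. For $s=1$ this symmetric weight is exactly $n/2$, and the interval in (\ref{geo:witt}) becomes $(-1/2,1/2)$, so the geometric Witt assumption reads ${\rm Spec}({\dirac}_F)\cap(-1/2,1/2)=\emptyset$; combined with the triviality of the asymptotics space this makes the core Dirac essentially self-adjoint at $\beta=n/2$. A self-adjoint Fredholm operator has vanishing index, and this value then propagates across $(n/2-1,n/2)$ by the constancy just established. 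I expect the genuinely delicate point to be this last pinning of the index: for the Laplacian it was automatic because the symmetric weight $ns/2$ always lies in $\overline{I_a}$ once $n\geq 4$, whereas here one must invoke the self-adjoint duality ${\rm ind}_\beta=-{\rm ind}_{ns-\beta}$ and keep track of the boundary zeros on $\Gamma_0$ (where the open-strip criterion for the asymptotics space and the closed-line criterion for Fredholmness part company), so that the argument is cleanest exactly at the self-adjoint value $s=1$ singled out in the statement.
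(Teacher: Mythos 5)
Your proposal is correct and follows the paper's own route: the paper's entire proof of this theorem is the phrase ``by arguing exactly as above,'' and what you write out --- the transcription of the asymptotics-space argument of Theorem \ref{self:crit} and Corollary \ref{map:p:lap} to the first-order conormal symbol $\mathscr P_0^\tau$, the emptiness of the strip for $s\le 0$, the identification of $\Theta^\tau_{n/2}=\emptyset$ with (\ref{geo:witt}), and the constancy of the index on $(n/2-s,n/2)$ --- is exactly the intended argument. The one step you flag as delicate, pinning the constant index to $0$, is indeed the only real issue, and it is a gap in the paper's one-line proof as much as in your sketch: the symmetric weight of the $L^2$ realization is $ns/2$ (Remark \ref{self-ad-delta}), while the self-dual weight of the bounded map (\ref{dirac:map}) is $\hat a=(n-1)s/2$ (the correct reflection for the map is $\beta\mapsto (n-1)s-\beta$ rather than $\beta\mapsto ns-\beta$, because the target weight is shifted by $s$), and either of these lies in, or on the closure of, $(n/2-s,n/2)$ only for $s$ close to $1$ --- for $\hat a$ one needs $n/(n+1)<s<n/(n-1)$, which already fails for $s=1/2$, $n=4$. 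So the symmetry-plus-constancy argument genuinely closes only near $s=1$, where $\hat a$ is the midpoint of the interval and $ns/2=n/2$ is its right endpoint (giving essential self-adjointness exactly as you argue); compare Theorem \ref{albin:mellin:enh}, whose interval is centered at $\hat a$ by construction and where the paper does verify that the symmetric weight lies inside.
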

		
		\begin{remark}\label{albin:cp}
			This should be compared with \cite[Theorem 1.1]{albin2016index}, which proves essential self-adjointness for $s=1$ in the general edge setting. An alternate approach to this latter result, which works more generally for  stratified spaces, has been recently put forward in \cite{hartmann2018domain}.  
		\end{remark}
		
		\begin{remark}\label{cheeger}
			If $D=d+d^*$, the Hodge-de Rham operator acting on differential forms, then the analogue of the Witt condition above translates into a purely topological obstruction. Precisely, if the cohomology group $H^{\frac{n-1}{2}}(F,\mathbb R)$ is trivial (in particular, if $n$ is even) then, after possibly rescaling the link metric $g_F$, $D_{g_1}$ is essentially self-adjoint \cite{cheeger1979spectral}. Extensions of this foundational result to general stratified spaces appear in \cite{albin2012signature}.
		\end{remark}
		
		This Fredholmness property of $\dirac_{g_s}$ may be substantially improved if we assume that $\kappa_{\overline g}$, the scalar curvature of $\overline g$, is non-negative when restricted to the conical region $U'$. Since
		\[
		\kappa_{\overline g}|_{U'}=\left(\kappa_{g_F}-(n-1)(n-2)\right)x^{-2}+O(x^{-1}), \quad x\to 0, 
		\]
		we infer that $\kappa_{g_F}\geq (n-1)(n-2)>0$ and a well-known estimate \cite[Section 5.1]{friedrich2000dirac} gives 
		\[
		\vartheta\in{\rm Spec}(\dirac_{F})\Longrightarrow |\vartheta|\geq \frac{n-1}{2}, 
		\]
		which allows us to replace  (\ref{geo:witt}) by 
		\begin{equation}\label{geo:witt:imp}
			{\rm Spec}({\dirac}_F)\cap\left(\frac{1-n}{2},\frac{n-1}{2}\right)=\emptyset,
		\end{equation}
		{a gap estimate that, remarkably, does not involve the parameter $s$.} 
		In this way we obtain the following specialization  of Theorem \ref{albin:mellin}.
		\begin{theorem}\label{albin:mellin:enh}
			If $|s|\leq 1$ and  $\kappa_{g_s}|_{U'}\geq 0$
			then the Dirac map (\ref{dirac:map})
			is Fredholm of index $0$ whenever 
			\begin{equation}\label{allow:int}
				\frac{1}{2}(n-1)(s-1)< \beta<\frac{1}{2}(n-1)(s+1).
			\end{equation} 
		\end{theorem}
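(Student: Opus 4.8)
The plan is to read the statement as the curvature-improved specialization of Theorem \ref{albin:mellin}: first establish Fredholmness on the whole interval (\ref{allow:int}), then pin the index to zero by a self-duality argument at the midpoint of that interval. I would begin by recording, from the conjugated conormal symbol $\mathscr P_0^\tau$, that its zeros sit at $\zeta=\vartheta+\hat a-n/2$ with $\vartheta\in{\rm Spec}(\dirac_F)$, so that such a zero meets the critical line $\Gamma_{\beta-n/2}$ exactly when $\beta-\hat a=\vartheta$. By the Fredholm criterion underlying Theorem \ref{albin:mellin} together with its converse (the spinorial analogue of Remark \ref{rem:conv}), the Dirac map is therefore Fredholm precisely when $\beta-\hat a\notin{\rm Spec}(\dirac_F)$.

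Next I would bring in the curvature hypothesis. Via the conformal transformation law for the scalar curvature applied to the leading expansion displayed just before the statement---this is where $|s|\le 1$ is used, to guarantee that $\kappa_{g_s}|_{U'}\ge 0$ still forces $\kappa_{g_F}\ge(n-1)(n-2)$---Friedrich's eigenvalue estimate produces the $s$-independent spectral gap (\ref{geo:witt:imp}). It follows that $\beta-\hat a\notin{\rm Spec}(\dirac_F)$ for every $\beta-\hat a\in\left(\tfrac{1-n}{2},\tfrac{n-1}{2}\right)$, that is, for every $\beta$ in the interval (\ref{allow:int}), whose endpoints are exactly $\hat a\mp\tfrac{n-1}{2}$. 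This yields Fredholmness throughout (\ref{allow:int}); and since no zero of $\mathscr P_0^\tau$ crosses the critical line as $\beta$ sweeps this interval, the jump principle shows that the Fredholm index is \emph{constant} there.

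It then remains to compute this constant index at one convenient weight, and for this I would exploit the formal self-adjointness of $\dirac_{g_s}$ with respect to $d{\rm vol}_{g_s}$. Pairing by $d{\rm vol}_{g_s}$ forces the adjoint of $\dirac_{g_s,\beta}$ to be again a Dirac map, namely $\dirac_{g_s,\,2\hat a-\beta}$, the weight being reflected through $\hat a=(n-1)s/2$---precisely the midpoint of (\ref{allow:int}). At $\beta=\hat a$ the operator is thus self-dual, so its cokernel is isomorphic to its kernel and the index vanishes; note that $\hat a$ does lie in (\ref{allow:int}) and is a Fredholm weight, since $0$ lies in the gap. Constancy of the index on the interval then upgrades this to index $0$ everywhere on (\ref{allow:int}).

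The main obstacle, and the step I would treat most carefully, is the bookkeeping in this last duality argument: one must verify that the natural $d{\rm vol}_{g_s}$-pairing genuinely sends the weight $\beta$ to its reflection $2\hat a-\beta$---a spinorial counterpart of Remark \ref{self-ad-delta}, made delicate by the fact that $\dirac_{g_s}$ shifts the Mellin weight---so that the allowed interval is truly symmetric about the self-dual weight. A secondary, more routine technicality is the transfer of the curvature sign from $g_s$ to the link $(F,g_F)$ through the conformal factor $x^{2s-2}$, which is exactly the point that the restriction $|s|\le 1$ is designed to control.
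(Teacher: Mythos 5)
Your argument is correct and, for its first two steps, coincides with the paper's: the conformal transformation of the scalar curvature together with $|s|\le 1$ yields the link bound $\kappa_{g_F}\geq(n-1)(n-2)$, Friedrich's estimate then gives the $s$-independent gap (\ref{geo:witt:imp}), and the resulting absence of zeros of the conjugated conormal symbol on the critical line $\Gamma_{\beta-n/2}$ for $\beta$ ranging over (\ref{allow:int}) gives Fredholmness with \emph{constant} index on that interval. Where you genuinely depart from the paper is in pinning down the value of that constant. The paper evaluates the index at $\beta=ns/2$, the unique weight at which the \emph{unbounded} core Dirac operator on the fixed Hilbert space $\mathcal H^{0,2}_{\beta}(S_X)$ is symmetric (Remark \ref{self-ad-delta}); combined with the uniqueness of the closed extension coming from Theorem \ref{albin:mellin} this yields essential self-adjointness, hence vanishing index, at that weight. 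You instead dualize the \emph{bounded} Fredholm map with respect to the $d{\rm vol}_{g_s}$ pairing: since $\dirac_{g_s}$ shifts the Mellin weight by $s$, the adjoint is the Dirac map at weight $ns-s-\beta=2\hat a-\beta$, and self-duality at the midpoint $\beta=\hat a=(n-1)s/2$ forces the index to vanish there. Your bookkeeping is right --- the reflection is through $\hat a$ rather than through $ns/2$ precisely because of the weight shift in the target space, and $\hat a$ is indeed a Fredholm weight because $0$ lies in the spectral gap. The two distinguished weights $ns/2$ and $(n-1)s/2$ differ for $s\neq 0$, but both lie in (\ref{allow:int}) when $|s|\leq 1$ and $n\geq 3$, and constancy of the index propagates the conclusion either way. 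Your route has the mild advantage of bypassing the minimal/maximal-domain analysis (unique closed extension) altogether, at the cost of having to justify the identification $(\mathcal H^{\sigma,p}_{\beta})^{*}\cong\mathcal H^{-\sigma,p'}_{ns-\beta}$ under the $d{\rm vol}_{g_s}$ pairing and to invoke the $(\sigma,p)$-independence of the index; the paper's route simply reuses machinery it has already set up for the Laplacian.
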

		
		\begin{proof}
			If $\alpha={(s-1)(n-2)}/{2}$, a computation shows that 
			\[
			\kappa_{g_s}|_{U'}=x^{-\frac{\alpha(n+2)}{n-2}}\left((n-1)(n-2)(1-s^2)x^{\alpha-2}+\kappa_{\overline g}|_{U'}x^\alpha\right),
			\]
			so that $\kappa_{g_s}|_{U'}\geq 0$ implies $\kappa_{\overline g}|_{U'}\geq 0$ {and we may appeal to (\ref{geo:witt:imp}) to obtain (\ref{allow:int}) as the interval where the index remains constant. Since $\beta=ns/2$ lies in this interval if and only if $1-n<s<1+n$, the result follows by Remark \ref{self-ad-delta}.} 
		\end{proof}	
		\section{Applications}\label{examp:op}
		
		We now discuss a few (selected) applications of Theorems \ref{self:adj}, \ref{albin:mellin}
		and \ref{albin:mellin:enh} and Remark \ref{a:0:2} in Geometric Analysis.
		
		\subsection{The Laplacian in ${\rm AC}_0$ manifolds} This class of manifolds appears in Example \ref{ex:conic} above, so that $s=1$ in (\ref{met:edge}). Thus, Theorem \ref{self:adj} applies with $h=g_1$ and $a=n-2$.  It is convenient here to pass from $\beta$ to $\gamma$ as in (\ref{beta:gamma}), so  the Sobolev-Mellin norm in (\ref{norm:near}) becomes
		\begin{equation}\label{norm:0}
			\int|x^{\frac{n}{2}-\gamma} u(x,z)|^px^{-1}dxd{\rm vol}_{g_F}(z),
		\end{equation}
		which gives rise to the Sobolev-Mellin spaces $\mathcal H^{\sigma,\gamma}_{p}(V)$ considered in \cite{schrohe1999ellipticity}. The following result is an immediate consequence of Theorem \ref{self:adj}.
		
		\begin{theorem}\label{self:ac0}
			If $n\ge 4$ then the Laplacian map
			\[
			\Delta_{h,\gamma}:\mathcal H^{\sigma,\gamma}_{p}(V)\to \mathcal H^{\sigma-2,\gamma-2}_{p}(V)
			\]
			is Fredholm of index $0$ if $(4-n)/2<\gamma<n/2$.
		\end{theorem}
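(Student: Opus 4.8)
The plan is to obtain this as a direct specialization of Theorem \ref{self:adj} via the change of variable (\ref{beta:gamma}). First I would record that, for an ${\rm AC}_0$ manifold, Example \ref{ex:conic} gives $s=1$ in (\ref{met:edge}), so that the quantity (\ref{exp:a}) becomes $a=(n-2)s=n-2$. Since $n\geq 4$ we have $a\geq 2>0$, and hence the open interval $I_a$ appearing in Theorem \ref{self:adj} is simply $(0,n-2)$. Thus Theorem \ref{self:adj} already asserts that the map (\ref{lap:cont:sob}) with $s=1$ is Fredholm of index $0$ precisely when $\beta\in(0,n-2)$.

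The next step is purely bookkeeping: I would translate between the two labelings of the Sobolev--Mellin scale under $\gamma=n/2-\beta$. By definition, $\mathcal H_\beta^{\sigma,p}(X)$ coincides with the space $\mathcal H^{\sigma,\gamma}_{p}(V)$; comparing the weighted norms (\ref{norm:near}) and (\ref{norm:0}) near $q$, the substitution $\beta=n/2-\gamma$ together with $d_+x=x^{-1}dx$ shows immediately that these are the same norm. The one point requiring attention is the codomain: for $s=1$ the Laplacian map (\ref{lap:cont:sob}) sends $\mathcal H_\beta^{\sigma,p}$ to $\mathcal H_{\beta+2}^{\sigma-2,p}$, and since the affine relation (\ref{beta:gamma}) converts the shift $\beta\mapsto\beta+2$ into $\gamma\mapsto\gamma-2$, the target is exactly $\mathcal H^{\sigma-2,\gamma-2}_{p}(V)$, matching the statement.

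Finally I would convert the Fredholm interval. Substituting $\beta=n/2-\gamma$ into the condition $\beta\in(0,n-2)$, the inequality $\beta>0$ becomes $\gamma<n/2$, while $\beta<n-2$ becomes $\gamma>n/2-(n-2)=(4-n)/2$. This yields the asserted range $(4-n)/2<\gamma<n/2$ and completes the argument. I do not expect any genuine obstacle here: the entire analytic content of Theorem \ref{self:ac0} is already carried by Theorem \ref{self:adj}, and the only care needed is to track how the endpoints of $I_a$ and the weight shift in the codomain transform under the reparametrization (\ref{beta:gamma}).
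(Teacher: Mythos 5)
Your proposal is correct and follows exactly the paper's route: the paper also obtains Theorem \ref{self:ac0} as an immediate specialization of Theorem \ref{self:adj} with $s=1$, $a=n-2$, $I_a=(0,n-2)$, and the relabeling $\gamma=n/2-\beta$ of (\ref{beta:gamma}), which converts the weight shift $\beta\mapsto\beta+2$ into $\gamma\mapsto\gamma-2$ and the interval $(0,n-2)$ into $((4-n)/2,\,n/2)$. The only difference is that you spell out the bookkeeping the paper leaves implicit.
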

		
		This result is used in \cite[Section 2]{de2022scalar} as a key step in the argument toward proving that a function which is negative somewhere is the scalar curvature of some conical metric in $V$.   
		
		\subsection{The Laplacian in ${\rm AC}_\infty$ manifolds} This class of manifolds appears in Example \ref{ex:af} above, so that $s=-1$ in (\ref{met:edge}). Thus, Theorem \ref{self:adj} applies with $h=g_{-1}$ and $a=2-n$.  If $r=x^{-1}$  then the Sobolev-Mellin norm in (\ref{norm:near}) becomes
		\begin{equation}\label{norm:1}
			\int|r^{-\beta} u(r,z)|^pr^{-n}d{\rm vol}_{h}(r,z),
		\end{equation}
		which gives rise to the weighted Sobolev spaces $L^p_{\sigma,\beta}(V)$ considered in \cite[Section 9]{lee1987yamabe}. The following result is an immediate consequence of Theorem \ref{self:adj}; compare with \cite[Theorem 9.2 (b)]{lee1987yamabe}.
		
		\begin{theorem}\label{self:ac0the}
			If $n\ge 4$ then the Laplacian map
			\[
			\Delta_{h,\beta}:L^p_{\sigma,\beta}(V)\to L^p_{\sigma-2,\beta-2}(V)
			\]
			is Fredholm of index $0$ if $2-n<\beta<0$.
		\end{theorem}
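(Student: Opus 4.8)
The plan is to recognize this statement as the $s=-1$ instance of the general framework and to deduce it from Theorem \ref{self:adj} after matching the two scales of weighted spaces. First I would record that, for $\mathrm{AC}_\infty$ manifolds, Example \ref{ex:af} gives $s=-1$, so the relevant exponent is $a=(n-2)s=2-n$. Since $n\geq 4$ we have $a\neq 0$, and the open interval with endpoints $a$ and $0$ is $I_a=(2-n,0)$. Note also that the general Laplacian map (\ref{lap:cont:sob}) sends $\mathcal H^{\sigma,p}_\beta(X)$ to $\mathcal H^{\sigma-2,p}_{\beta+2s}(X)=\mathcal H^{\sigma-2,p}_{\beta-2}(X)$, so the weight shift already agrees with the target $L^p_{\sigma-2,\beta-2}(V)$ in the statement.

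The substance of the argument is to verify that the change of variable $r=x^{-1}$ provides an isomorphism $\mathcal H^{\sigma,p}_\beta(X)\cong L^p_{\sigma,\beta}(V)$ intertwining the two Laplacian maps. The relevant computations are that $g_{-1}=x^{-4}\overline g=dr^2+r^2 g_F$ recovers the cone metric in the $r$ coordinate, that the Fuchs operator transforms as $\mathsf D=x\partial_x=-r\partial_r$, and that $d{\rm vol}_h=r^{n-1}dr\,d{\rm vol}_{g_F}$. Combining these, the $k=0$ Sobolev-Mellin norm (\ref{norm:near}) becomes
\[
\int|x^\beta u|^p x^{-1}dx\,d{\rm vol}_{g_F}=\int|r^{-\beta}u|^p r^{-n}d{\rm vol}_h,
\]
which is exactly the weighted norm (\ref{norm:1}) defining $L^p_{\sigma,\beta}(V)$. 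The higher-order norms match because the tangential derivatives $\partial_z$ are untouched by the substitution while $\mathsf D$ corresponds to $-r\partial_r$, so the two families of admissible derivatives coincide up to sign.

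With this identification in place, the proof is a transcription: by Theorem \ref{self:adj}, since $n\geq 4$ and $a=2-n\neq 0$, the map $\Delta_{g_{-1},\beta}$ is Fredholm of index $0$ for every $\beta\in I_a=(2-n,0)$, and transporting this through the isomorphism yields the claimed Fredholmness of $\Delta_{h,\beta}:L^p_{\sigma,\beta}(V)\to L^p_{\sigma-2,\beta-2}(V)$ on the same interval.

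I expect the only genuine difficulty to be a bookkeeping one: confirming that the spaces $L^p_{\sigma,\beta}(V)$ of \cite[Section 9]{lee1987yamabe} literally coincide with the image of $\mathcal H^{\sigma,p}_\beta(X)$ under $r=x^{-1}$, in particular that the weight convention and the use of covariant derivatives with respect to $h$ (rather than the Fuchs-type derivatives natural on the Mellin side) produce equivalent norms. Once the conventions are aligned---the $k=0$ case above serving as the model---the remainder is immediate from Theorem \ref{self:adj}.
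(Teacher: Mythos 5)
Your proposal is correct and follows essentially the same route as the paper: the paper states Theorem \ref{self:ac0the} as an immediate consequence of Theorem \ref{self:adj} with $s=-1$, $a=2-n$, $I_a=(2-n,0)$, after observing that under $r=x^{-1}$ the norm (\ref{norm:near}) becomes (\ref{norm:1}). Your computations ($g_{-1}=dr^2+r^2g_F$, $\mathsf D=-r\partial_r$, the weight shift $\beta\mapsto\beta-2$) just make explicit the bookkeeping the paper leaves implicit.
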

		
		\begin{remark}\label{as:anal}
			Consider the case in which the link is the round sphere $(\mathbb S^{n-1},\delta)$ and $\nu_\infty>(n-2)/2$. Thus, we are in the {\em asymptotically flat} case so dear to practitioners of Mathematical Relativity \cite{lee1987yamabe,bartnik1986mass}. Here, an asymptotic invariant for $(V,h)$, the ADM mass $\mathfrak m_{(V,h)}$, is defined by
			\[
			\mathfrak m_{(V,h)}=\lim_{r\to +\infty}\int_{S^{n-1}_r}\left(h_{ij,j}-h_{jj,i}\right){\eta}^idS^{n-1}_r,
			\]
			where $h_{ij}$ are the coefficients of $h$ in the given coordinate system, the comma denotes partial differentiation, $S^{n-1}_r$ is the coordinate sphere of radius $r$ in the asymptotic region and $\eta$ is its outward unit normal (with respect to the flat metric).  
			The problem remains of checking that the expression above does {\em not} depend on the particular coordinate system chosen near infinity. The first step in confirming this assertion involves the construction of {harmonic} coordinates; this is explained in \cite[Theorem 9.3]{lee1987yamabe}, which is a rather straightforward consequence of Theorem \ref{self:ac0the}.
		\end{remark}
		
		\subsection{The Laplacian in ${\rm ACyl}$ manifolds}\label{as:cyl:man}
		This class of manifolds appears in Example \ref{asym:cyl} above, so that $s=0$ in (\ref{met:edge}). Thus, Theorem \ref{self:adj} applies with $h=g_0$ and  $a=0$.  If $x=e^{-r}$ and $\delta=-\beta$ then the Sobolev-Mellin norm in (\ref{norm:near}) becomes
		\[
		\int|e^{\delta r} u(r,z)|^pd{\rm vol}_{h}(r,z),
		\]
		which gives rise to the weighted Sobolev spaces $W^p_{\sigma,\delta}(V)$ considered in \cite{lockhart1985elliptic}, but notice  that these authors use $\log r$ instead of $r$. The following result is an immediate consequence of Remark \ref{a:0:2}.
		
		\begin{theorem}\label{self:ac0the:cyl}
			If $n\ge 4$ then the Laplacian map
			\[
			\Delta_{h,\delta}:W^p_{\sigma,\delta}(V)\to W^p_{\sigma-2,\delta}(V)
			\]
			is Fredholm if $0<\delta<\sqrt{\mu_{g_F}}$, where $\mu_{g_F}$ is the first (positive) eigenvalue of $\Delta_{g_F}$.
		\end{theorem}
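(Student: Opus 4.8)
The plan is to deduce this directly from the $a=0$ analysis recorded in Remark~\ref{a:0:2}, after identifying the weighted space $W^p_{\sigma,\delta}(V)$ with an appropriate Sobolev--Mellin space. First I would observe that, as noted in Example~\ref{asym:cyl}, the ${\rm ACyl}$ case corresponds to $s=0$ in (\ref{met:edge}), so that the exponent $a=(n-2)s$ from (\ref{exp:a}) vanishes. This places us squarely in the regime treated by Remark~\ref{a:0:2}, where the roots of $Q_\mu$ are $-n/2\pm\sqrt{\mu}$ and Fredholmness of $\Delta_{g_0,\beta}$ fails precisely when $\beta=\pm\sqrt{\mu}$ for some $\mu\in{\rm Spec}(\Delta_{g_F})$; equivalently, by Remark~\ref{a:0}, the map $\Delta_{g_0,\beta}$ is Fredholm if and only if $\beta^2\notin{\rm Spec}(\Delta_{g_F})$.

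Next I would perform the change of variables $x=e^{-r}$ together with $\delta=-\beta$, and verify that this identifies $\mathcal H^{\sigma,p}_\beta(X)$ with $W^p_{\sigma,\delta}(V)$ and transforms the map (\ref{lap:cont:sob}) (with $s=0$, so that the weight is preserved) into the map $\Delta_{h,\delta}:W^p_{\sigma,\delta}(V)\to W^p_{\sigma-2,\delta}(V)$ of the statement. The only computation is to check that the Mellin norm $\int|x^\beta u|^p\,d_+x\,d{\rm vol}_{g_F}$ of (\ref{norm:near}) becomes, under $x=e^{-r}$ (so that $d_+x=-dr$ and $x^\beta=e^{\delta r}$), the cylindrical norm $\int|e^{\delta r}u|^p\,dr\,d{\rm vol}_{g_F}$ displayed just before the statement; this is routine.

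With this dictionary in place, the Fredholmness criterion reads $\delta^2=\beta^2\notin{\rm Spec}(\Delta_{g_F})$. For $0<\delta<\sqrt{\mu_{g_F}}$ one has $0<\delta^2<\mu_{g_F}$, and since the spectrum of $\Delta_{g_F}$ on the closed manifold $(F,g_F)$ consists of $0$ together with the positive eigenvalues whose smallest is $\mu_{g_F}$, the open interval $(0,\mu_{g_F})$ meets ${\rm Spec}(\Delta_{g_F})$ in the empty set. Hence $\delta^2\notin{\rm Spec}(\Delta_{g_F})$ and Fredholmness follows at once.

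I expect the only genuinely subtle point---and the reason the conclusion asserts \emph{only} Fredholmness rather than vanishing index, in contrast with Theorem~\ref{self:adj}---to be the failure of the ``self-adjoint anchor'' $\beta=ns/2=0$ used there to pin down the index. Indeed, the weight $\beta=0$ corresponds to $\delta=0$ and to $\beta^2=0\in{\rm Spec}(\Delta_{g_F})$ (the constant eigenfunction), so it is itself a non-Fredholm value. Consequently the interval $(0,\sqrt{\mu_{g_F}})$ lies entirely on one side of this forbidden weight, and the index-preservation argument across a spectrum-free strip cannot transport index $0$ from $\beta=0$ into the interval. This is exactly the expected behaviour for asymptotically cylindrical ends, where the index may genuinely jump as the weight crosses $0$, which is why here one claims Fredholmness without specifying the index.
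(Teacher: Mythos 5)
Your proposal is correct and follows the same route as the paper: the paper derives this theorem as an immediate consequence of Remark~\ref{a:0:2} (the $a=0$ case, Fredholmness iff $\beta^2\notin{\rm Spec}(\Delta_{g_F})$) after the change of variables $x=e^{-r}$, $\delta=-\beta$ identifying the Sobolev--Mellin norm with the cylindrical weighted norm. Your closing observation that the anchor $\beta=ns/2=0$ is itself a forbidden weight (since $0\in{\rm Spec}(\Delta_{g_F})$), which is why no index is asserted, is a correct point the paper leaves implicit.
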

		
		The H\"older counterpart of this result is used in \cite{haskins2015asymptotically} to study  asymptotically cylindrical Calabi-Yau manifolds.
		
		\subsection{The Laplacian in ${\rm AC}_0/{\rm AC}_\infty$ manifolds}\label{man:0inf} 	This class of manifolds appears in Example \ref{a0:ainf} above, so that $s=\pm 1$ in (\ref{met:edge}) depending on the nature of the end. The corresponding mapping properties for the Laplacian are formulated in weighted Sobolev spaces incorporating the norms induced by  (\ref{norm:0}) and (\ref{norm:1}) above.  These properties, including the extra information coming from the jumps in the Fredholm index, are used in \cite{pacini2013special} to study the moduli space of special Lagrangian conifolds in $\mathbb C^m$; see also \cite{joyce2003special}.
		
		\subsection{The Dirac operator in ${\rm AC}_0$ spin manifolds}\label{dir:ac0:man} For this class of manifolds, Theorems \ref{albin:mellin} and \ref{albin:mellin:enh} apply with $s=1$ (and $h=g_1$) so if we further assume that $\kappa_{h}|_{U'}\geq 0$ then the Dirac operator $\dirac_{h}$ is Fredholm of index $0$ for $0<\beta<n-1$ with the core Dirac being essentially self-adjoint for $\beta=n/2$. Now recall that if $n$ is even then the spinor bundle decomposes as $S_V=S_V^+\oplus S_V^-$, with a corresponding decomposition for $\dirac_{h}$:
		\[
		\dirac_{h}=\left(
		\begin{array}{cc}
			0 & \dirac_{h}^-\\
			\dirac_{h}^+ & 0
		\end{array}
		\right)
		\]  
		where $\dirac_{h}^\pm:\Gamma(S_V^\pm)\to \Gamma(S_V^\mp)$, the chiral Dirac operators, are adjoint to each other. Thus, it makes sense to consider the {\em index} of $\dirac_{h}^+$:
		\[
		{\rm ind}\, \dirac_{h}^+=\dim\ker \dirac_{h}^+-\dim\ker \dirac_{h}^-.
		\] 
		This fundamental integer invariant can be explicitly computed in terms of topological/geometric data of the underlying ${\rm AC}_0$ manifold by means of heat asymptotics \cite{albin2016index,chou1985dirac,lesch1997differential}. The resulting formula has been used in \cite{de2022scalar} to exhibit obstructions for the existence of conical metrics with positive scalar curvature. 
		
		\subsection{The Dirac operator in asymptotically flat spin manifolds}\label{dir:asym:f}
		If an asymptotically flat manifold $V$ as in Remark \ref{as:anal} (with $h=g_{-1}$) is spin and satisfies $\kappa_{h}\geq 0$ in the asymptotic region then Theorem \ref{albin:mellin:enh} applies and $\dirac_{h}:L^p_{\sigma,\beta}(S_V)\to L^p_{\sigma-1,\beta-1}(S_V)$ is Fredholm of index $0$ for $1-n<\beta<0$. If we further assume that $\kappa_{h}\geq 0$ {\em everywhere} then integration by parts starting with the Weitzenb\"ock formula for the Dirac Laplacian $\dirac_{h}^2$ shows that $\dirac_{h}$ is injective and hence surjective by Fredholm alternative. We now take a {\em parallel} spinor $\phi_{\infty}$ in $\mathbb R^n$, $|\phi_\infty|=1$, and transplant it to the asymptotic region by means of the diffeomorphism $\psi$ in Example \ref{ex:af}. If we still denote by $\phi_{\infty}$ a smooth extension of this spinor to the whole of $V$, then 
		a computation shows that, as $r\to\infty$, 
		\[
		\dirac_{h}\phi_\infty=O(|\partial h|)=O(r^{-\nu_\infty-1})\in L^p_{\sigma-1,\beta-1}(S_V), \quad \beta\in\left[1-\frac{n}{2},0\right),
		\]
		so there exists $\phi_0\in L^p_{\sigma,\beta}(S_V)$ with $\dirac_{h}\phi_0=-\dirac_{h}\phi_\infty$. It follows that $\phi=\phi_0+\phi_\infty$ is harmonic ($\dirac_{h}\phi=0 $) and $|\phi-\phi_\infty|=O(r^{\beta})$. 
		With this spinor $\phi$ at hand,  another (more involved!) integration by parts     yields Witten's remarkable formula for the ADM mass of $(V,h)$:
		\[
		\mathfrak m_{(V,h)}=c_n\int_V\left(|\nabla\phi|^2+\frac{\kappa_h}{4}|\phi|^2\right)d{\rm vol}_h, \quad c_n>0.
		\]
		From this we easily deduce the following fundamental positive mass inequality.
		
		\begin{theorem}\cite{witten1981new} If $(V,h)$ is asymptotically flat and spin as above and $\kappa_h\geq 0$ everywhere then $\mathfrak m_{(V,h)}\geq 0$. Moreover, the equality holds only if $(V,h)=(\mathbb R^n,\delta)$ isometrically.
		\end{theorem}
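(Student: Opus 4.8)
The plan is to read off both assertions directly from the Witten identity
\[
\mathfrak m_{(V,h)}=c_n\int_V\left(|\nabla\phi|^2+\frac{\kappa_h}{4}|\phi|^2\right)d{\rm vol}_h,\quad c_n>0,
\]
already established above for the harmonic spinor $\phi=\phi_0+\phi_\infty$ asymptotic to the unit parallel spinor $\phi_\infty$. The first step is to observe that, since $\kappa_h\geq 0$ everywhere, the integrand is pointwise nonnegative; hence $\mathfrak m_{(V,h)}\geq 0$, which settles the inequality with no further work. The entire content of the theorem beyond this is therefore the rigidity statement.

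For rigidity I would argue that equality $\mathfrak m_{(V,h)}=0$ forces both terms to vanish identically, i.e.\ $\nabla\phi\equiv 0$ and $\kappa_h|\phi|^2\equiv 0$. A parallel spinor has constant pointwise norm, and since $|\phi|\to|\phi_\infty|=1$ at infinity we obtain $|\phi|\equiv 1$; in particular $\kappa_h\equiv 0$ and $\phi$ is a nowhere-vanishing parallel spinor on all of $V$. The crucial step is then to upgrade the mere existence of a parallel spinor---which by itself only yields Ricci-flatness, through the standard contraction of the integrability relation $\mathcal R(X,Y)\phi=0$ against Clifford multiplication $\mathfrak c(e_j)$---to genuine flatness of $(V,h)$. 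To this end I would repeat the construction of $\phi$ for a full basis $\phi_\infty^{(1)},\dots,\phi_\infty^{(N)}$, $N=\mathrm{rk}\,S_V$, of the space of parallel spinors on $(\mathbb R^n,\delta)$: under the equality hypothesis each $\phi_\infty^{(k)}$ produces a parallel spinor $\phi^{(k)}$ on $V$, and these are linearly independent because their limits at infinity are. Consequently the holonomy representation of $(V,h)$ fixes the entire spinor space; since the spin representation of ${\rm Spin}(n)$ is faithful, the restricted holonomy is trivial and $(V,h)$ is flat. That the construction of each $\phi^{(k)}$ goes through uniformly relies on the Fredholm surjectivity of the Dirac map furnished by Theorem \ref{albin:mellin:enh}, exactly as in the single-spinor case treated above.

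The main obstacle, and the part demanding the most care, is the concluding geometric step: deducing that a complete, flat manifold that is asymptotically flat with a single end is globally isometric to Euclidean space. Here I would invoke that a complete flat manifold is a quotient $\mathbb R^n/\Gamma$ by a discrete group $\Gamma$ of isometries acting freely and properly discontinuously; the presence of exactly one asymptotically Euclidean end forces $\Gamma$ to be trivial, since any nontrivial deck transformation would either generate additional ends or destroy the prescribed Euclidean asymptotics. This yields $(V,h)\cong(\mathbb R^n,\delta)$ isometrically, completing the proof.
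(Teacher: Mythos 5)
Your proposal is correct and follows essentially the same route as the paper: the paper derives Witten's formula for the harmonic spinor $\phi$ and then simply reads off the inequality, deferring all further details (including rigidity) to the appendix of \cite{lee1987yamabe}. The rigidity argument you supply --- equality forces $\phi$ parallel and $\kappa_h\equiv 0$, repeating the construction for a basis of constant spinors trivializes the holonomy via faithfulness of the spin representation, and a complete flat manifold with a single asymptotically Euclidean end must be $(\mathbb R^n,\delta)$ --- is precisely the standard argument of that reference, so there is nothing to add.
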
	
		
		The details of the argument above may be found in \cite[Appendix]{lee1987yamabe}.

		\section{Further applications}\label{nonempty:bd}
		
		The techniques described above may be adapted to handle more general situations. We only briefly discuss here three interesting cases.
		
		\subsection{Conformally conical manifolds with boundary} Here we consider conformally conical manifolds carrying a non-empty boundary $\partial X$ which is allowed to reach the tip of the cone. The formal definition is as in Example \ref{ex:conic}, except that the link $F$ itself carries a non-empty boundary $\partial F$. The key observation now is that both $\partial X$ and the double $2X$ along the boundary $\partial X$ are conformally conical manifolds as in Definition \ref{conic:metric} (the links of these ``boundaryless'' manifolds are $\partial F$ and $2F$, respectively). Thus, we are led to ask whether the Calder\'on-Seeley technique mentioned in the Introduction (for smooth manifolds) may be adapted to this context. This program has been carried out in \cite{coriasco2007realizations}, where it is shown, among other things, that the realizations of the Laplacian under standard boundary conditions (Dirichlet/Neumann) may be treated as well, at least in the ``straight'' case where the link metric is not allowed to vary with $x$ \cite[Section 5]{coriasco2007realizations}; see also \cite{fritzsch2020calder} for an approach in the setting of fibred cusp operators. If we invert the conical singularity as in Example $\ref{ex:af}$, we obtain an asymptotically flat manifold with a {\em non-compact} boundary and this theory provides a (rather sophisticated) approach to the results obtained ``by hand'' in \cite[Appendix A]{almaraz2014positive}. Finally, we mention that the setup in \cite{coriasco2007realizations} also applies to the realization of the Dirac operator acting on spinors under MIT bag boundary condition, so after inversion we recover the analytical machinery underpinning the positive mass theorems for asymptotically flat initial data sets in \cite{almaraz2014positive,almaraz2021spacetime}.

		\subsection{{Asymptotically hyperbolic} spaces}\label{sub:edge}
		We may consider an edge space $(X,g_s)$ with $g_s={x^{2s-2}}(\overline g+o(1))$ and $\overline g$ as in (\ref{met:edge:ex}). Here, 
		\[
		x^{2s}\Delta_{g_s}|_{U'}=\mathsf D^2+\tilde a\mathsf D+\Delta_{g_F}+x^2\Delta_{g_Y}+o(1), \quad \tilde a=a-d,\quad d=\dim Y.  
		\]
		If we specialize to the ``pure'' edge case in which the cone fiber $\mathcal C^F$ degenerates into a line ($F$ becomes a point) then $d=n-1$ and $\tilde a=a+1-n$, $n= \dim X\geq 3$,  and 
		\[
		g_s|_{U'}=x^{2s-2}\left(\overline g+o(1)\right),  \quad \overline g=dx^2+g_Y.
		\] 
		If we further take $s=0$ then  $(X,g_0)$ is {\em conformally compact} with $(Y,[g_Y])$ as its {\em conformal boundary} and $x$ is the corresponding {\em defining function}. {In particular, since $|dx|_{\overline g}=1$ along $Y$, a computation shows that $g_0$ is {\em asymptotically hyperbolic} in the sense that its sectional curvature approaches $-1$ as $x\to 0$.} Since $a=(n-2)s=0$, the conormal symbol gets replaced by
		\begin{equation}\label{con:symb:cc}
			\xi_{\Delta_{g_0}}(\zeta)=\zeta^2+(n-1)\zeta, 
		\end{equation}
		whose roots define a {\em unique} interval $(1-n,0)$ where the weight parameter $\beta$ is allowed to vary.
		Here, it is convenient to set 
		\[
		\beta=-\delta+\frac{1-n}{p}, \quad p>1,
		\]	
		so the Sobolev-Mellin norm in (\ref{norm:near}) becomes
		\[
		\int |x^{-\delta} u(x,y)|^px^{-n}dxd{\rm vol}_{g_Y}=\int |x^{-\delta} u(x,y)|^pd{\rm vol}_{g_0},
		\]
		which defines the weighted Sobolev spaces $H^{\sigma,p}_\delta(X)$ considered in \cite{andersson1993elliptic,lee2006fredholm}.  
		A  variation of the procedure above then yields the following result, which should be compared to  \cite[Proposition F]{lee2006fredholm} and \cite[Corollary 3.13]{andersson1993elliptic}; this latter reference only treats the case $p=2$. 
		
		\begin{theorem}\label{self:cc}
			The Laplacian map
			\begin{equation}\label{lap:cc}
				\Delta_{g_0,\delta}:H^{\sigma,p}_{\delta}(X)\to H^{\sigma-2,p}_{\delta}(X)
			\end{equation}
			is Fredholm of index $0$ if 
			\begin{equation}\label{int:hyp}
				\frac{1-n}{p}<\delta<\frac{(n-1)(p-1)}{p}.
			\end{equation}
		\end{theorem}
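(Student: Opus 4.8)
The plan is to run the edge calculus of Mazzeo, Lee and Andersson \cite{mazzeo1991elliptic,lee2006fredholm,andersson1993elliptic} in exact parallel with the conical argument of Section \ref{proof:m:t}, the only genuinely new input being the location of the self-adjoint weight. Freezing $P_0=\mathsf D^2+(1-n)\mathsf D$ at $x=0$ (the edge term $x^2\Delta_{g_Y}$ and the $o(1)$ remainder both drop out) and Mellin transforming produces the conormal symbol $\xi_{\Delta_{g_0}}(\zeta)=\zeta^2+(n-1)\zeta$ of (\ref{con:symb:cc}), whose only roots are $\zeta=0$ and $\zeta=1-n$. I would first invoke the edge (equivalently Mazzeo's $0$-) calculus to conclude that (\ref{lap:cc}) is Fredholm for every $\beta$ in the open interval $(1-n,0)$ cut out by these roots, with index locally constant in $\beta$ and jumping only across $\beta=0$ and $\beta=1-n$; the requisite invertibility of the normal (edge) operator holds throughout the interval because the spectral value $0$ lies below the $L^2$-spectrum of the model hyperbolic fiber. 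As $F$ is a point, the only eigenvalue is $\mu=0$ and there is a single critical strip, so the index is in fact constant on all of $(1-n,0)$.

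It then remains to pin down the value of this constant index, which I would do by evaluating at the weight corresponding to $L^2(X',d{\rm vol}_{g_0})$. Taking $\delta=0$ and $p=2$ in the defining norm gives exactly $\int|u|^2\,d{\rm vol}_{g_0}$, and under the dictionary $\beta=-\delta+(1-n)/p$ this is $\beta=(1-n)/2$, the \emph{midpoint} of $(1-n,0)$. The delicate point is that the naive conical recipe of Remark \ref{self-ad-delta}, $\beta=ns/2$, collapses onto the root $\beta=0$ when $s=0$; it is the \emph{product} normal form $\overline g=dx^2+g_Y$ (rather than a genuine cone $dx^2+x^2g_Y$) that relocates the symmetric weight to the interior point $(1-n)/2$, as one checks by matching $\int|u|^2 x^{2\beta-1}\,dx\,d{\rm vol}_{g_Y}$ with $\int|u|^2\,d{\rm vol}_{g_0}=\int|u|^2 x^{-n}\,dx\,d{\rm vol}_{g_Y}$.

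At this distinguished weight the argument will close quickly. Since $s=0$, Remark \ref{complete} guarantees that $(X',g_0)$ is complete, so $\Delta_{g_0}$ on $C^\infty_c(X')$ is essentially self-adjoint; equivalently $\mathcal Q(\Delta_{g_0})=\{0\}$ and its unique closed extension is self-adjoint on $L^2(X',d{\rm vol}_{g_0})$. As $\beta=(1-n)/2$ is not a root, this extension is Fredholm (equivalently, $0$ sits strictly below the bottom $(n-1)^2/4$ of the essential spectrum), and a self-adjoint Fredholm operator has vanishing index. Using the insensitivity of the index to $(\sigma,p)$ together with its constancy on $(1-n,0)$, I would conclude that $\Delta_{g_0,\beta}$ has index $0$ for every $\beta\in(1-n,0)$.

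The final step is pure bookkeeping: substituting $\beta=-\delta+(1-n)/p$ into $1-n<\beta<0$, the inequality $\beta<0$ becomes $\delta>(1-n)/p$ and $1-n<\beta$ becomes $\delta<(n-1)(p-1)/p$, which is precisely (\ref{int:hyp}). The main obstacle is conceptual rather than computational: recognizing that the self-adjoint $L^2(g_0)$ weight sits at the midpoint $(1-n)/2$ and not at the conical value $ns/2=0$, and then exploiting completeness to certify index $0$ there; everything else is a faithful transcription of Section \ref{proof:m:t}.
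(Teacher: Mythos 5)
Your proposal is correct and follows essentially the route the paper intends when it writes ``a variation of the procedure above then yields the following result'': read off the conormal symbol $\zeta^2+(n-1)\zeta$, obtain Fredholmness with constant index on the interval $(1-n,0)$ between the indicial roots, certify index $0$ at the symmetric $L^2(d{\rm vol}_{g_0})$ weight, and translate via $\beta=-\delta+(1-n)/p$. Your identification of that symmetric weight as the midpoint $\beta=(1-n)/2$ (rather than the conical value $ns/2=0$, which would land on an indicial root) is the correct adjustment and matches Remark \ref{outside}; the only cosmetic deviation is that you certify essential self-adjointness there by completeness of $(X',g_0)$, where the paper's own mechanism (as in case (i) of Corollary \ref{map:p:lap}) is the emptiness of the critical strip when $s=0$ --- both yield the same conclusion.
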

		
		\begin{remark}\label{outside}
			Since in this {asymptotically hyperbolic} case the Laplacian of the total space of the restricted fiber bundle $F\times Y=\{{\rm pt}\}\times Y{\to} Y$ endowed with the metric $g_F\oplus g_Y=g_Y$ does not show up in (\ref{con:symb:cc}), we are led to suspect that (\ref{lap:cc}) fails to be Fredholm if $\delta$ does not satisfy (\ref{int:hyp}). 
			This is the case indeed  and a proof of this claim may be found in \cite{lee2006fredholm}. Notice also that for $p=2$, (\ref{int:hyp}) becomes $|\delta|^2<(n-1)^2/4$, a bound that also appears in MacKean's estimate \cite{mckean1970upper}, which in particular provides a sharp lower bound for the bottom of the spectrum of the Laplacian in the model space (this is of course hyperbolic $n$-space $\mathbb H^n$, which is obtained in the formalism above by taking $(Y,g_Y)$ to be a round sphere). In fact, asymptotic versions of this estimate are used in \cite{andersson1993elliptic} as a key ingredient in directly establishing the mapping properties of geometric operators in {asymptotically hyperbolic spaces}. 
		\end{remark}
		
		{	
			\begin{remark}\label{qing}
				In an asymptotically hyperbolic manifold as above, the operator $\mathcal L_{g_0}^{(t)}:=\Delta_{g_0}+t(n-1-t)$, $t\in\mathbb C$, whose conormal symbol is
				\[
				\xi_{\mathcal L^{(t)}_{g_0}}(\zeta)=\zeta^2+(n-1)\zeta+t(n-1-t),
				\]
				also plays a distinguished role \cite{mazzeo1987meromorphic,graham2003scattering,case2016fractional}. 
				%, notably in questions related to the prescription of the scalar curvature. 
				Let us assume that $2t\in\mathbb R\backslash \{n-1\}$ so that, by symmetry, we may take $2t>n-1$. 
				Proceeding as above, we see that 
				\[
				\mathcal L^{(t)}_{g_0,\delta}:H^{\sigma,p}_{\delta}(X)\to H^{\sigma-2,p}_{\delta}(X)
				\]
				is Fredholm of index $0$ if
				\[
				\frac{(n-1-t)p+1-n}{p}<\delta < 	\frac{tp+1-n}{p}. 
				\]
				If $g_0$ is Einstein (${\rm Ric}_{g_0}=-(n-1)g_0$), the special choice $t=n$ is particularly important: the so-called {\em static potentials} (that is, solutions of $\nabla^2_{g_0}V=Vg_0$) all lie in the kernel of $\mathcal L^{(n)}_{g_0}$.	
				In this case, the H\"older counterpart of this result (which is obtained by sending $p\to +\infty$ in the assertion above) has been used in \cite{qing2003rigidity} to establish the existence of ``approximate'' static potentials. As a consequence, an asymptotically hyperbolic Einstein manifold with the round sphere as its conformal infinity was shown to be isometric to hyperbolic $n$-space, at least if $4\leq n\leq 7$.
			\end{remark}
		}
		
		\subsection{{Asymptotically hyperbolic} spaces with boundary}\label{further:cc}
		A more general kind of {asymptotically hyperbolic} space is obtained by assuming that the underlying conformally compact space $X$ carries a boundary decomposing as $\partial X=Y \cup Y_{\rm f}$, with the intersection $\Sigma=Y\cap Y_{\rm f}$ being a (intrinsically smooth) co-dimension two corner. We assume further that there exists a tubular neighborhood $U$ of $Y$ on which a defining function $x$ for $Y$ exists so that 
		\[
		g|_U=x^{-2}(\overline g+o(1)),\quad \overline g=dx^2+g_Y(y), 
		\]
		where $g_Y$ is a metric in $Y$. Thus, the conformal boundary $(Y,[g_Y])$ itself carries a boundary, namely, $(\Sigma,[{g_Y}|_\Sigma])$, whereas the other piece of the boundary, $Y_{\rm f}$, remains at a finite distance. Finally, we impose that $\nabla_{\overline g}x$ is tangent to $Y_{\rm f}$ along $U\cap Y_{\rm f}$, so that $Y$ and $Y_{\rm f}$ meet orthogonally along $\Sigma$. It is immediate to check that: i) the ``finite'' boundary $\partial X_{\rm f}:=Y_{\rm f}$ is a pure edge space as above with conformal infinity $(\Sigma,[g_Y|_\Sigma])$; ii) the double $2X_{\rm f}:=X\sqcup_{Y_{\rm f}}-X$  is naturally a pure edge space as above with conformal boundary having the closed manifold $2Y$ as carrier and conformal structure induced by $g_Y$. Thus, at least in principle, the appropriate version of the Calder\'on-Seeley approach mentioned in the Introduction should apply here. Very likely, this follows from the corresponding adaptation of the general setup in \cite{coriasco2007realizations,fritzsch2020calder}, so that both the realizations of the Laplacian and the Killing Dirac operator (under Dirichlet/Neumann boundary and chiral boundary conditions, respectively, imposed along $\partial X_{\rm f}$) may be shown to be Fredholm in suitable Sobolev scales. This should provide an alternate approach to the analysis underlying the positive mass theorems for asymptotically hyperbolic initial data sets in \cite{almaraz2020mass,almaraz2021spacetime}.

		\bibliographystyle{alpha}
		\bibliography{tribuzy-mat-cont}	
		
		%\begin{thebibliography}{9999999} 
			%
			%	
			%\end{thebibliography}

	\end{document}